\newtheorem{The}{Theorem}[section]
\newtheorem{Lem}[The]{Lemma}
\newtheorem{Prop}[The]{Proposition}
\newtheorem{Cor}[The]{Corollary}
\newtheorem{Ex}[The]{Example}
\newcommand{\C}{\mathbb{C}}
\newcommand{\R}{\mathbb{R}}
\newcommand{\N}{\mathbb{N}}
\newcommand{\Z}{\mathbb{Z}}
\begin{document}
 \title[ Maximal plurisubharmonic function]{Approximation of maximal plurisubharmonic functions}

\setcounter{tocdepth}{1}

  \author{Do Hoang Son} 
  
\address{Institute of Mathematics \\ Vietnam Academy of Science and Technology \\18
Hoang Quoc Viet \\Hanoi \\Vietnam}
\email{hoangson.do.vn@gmail.com }
\email{dhson@math.ac.vn}
 \date{\today\\ The author was funded by the Vietnam National Foundation for Science and Technology Development (NAFOSTED) under grant number 101.02-2017.306.}


\begin{abstract}
  Let $u$ be a maximal plurisubharmonic function in a domain $\Omega\subset\C^n$ ($n\geq 2$). It is classical that, for any $U\Subset\Omega$,
   there exists  a sequence of bounded plurisubharmonic functions $PSH(U)\ni u_j\searrow u$ satisfying the property: 
   $(dd^c u_j)^n$ is weakly convergent to $0$ as $j\rightarrow\infty$. In general, this property does not hold for arbitrary sequence. 
   In this paper, we show that \textit{for any} sequence of bounded plurisubharmonic functions $PSH(U)\ni u_j\searrow u$, 
   $(|u_j|+1)^{-a} (dd^cu_j)^n$ is  weakly convergent to $0$ as $j\rightarrow\infty$, where $a>n-1$.
   We also generalize some well-known results about approximation of maximal plurisubharmonic functions.
\end{abstract}

\maketitle

\tableofcontents
\newpage

\section*{Introduction}
 Let $\Omega\subset\C^n$ be a bounded domain $(n\geq 2)$. A function $u\in PSH(\Omega)$ is called maximal
 if for every open set $G\Subset\Omega$, and for each upper semicontinuous function $v$ on $\bar{G}$ such that
 $v\in PSH(G)$ and $v|_{\partial G}\leq u|_{\partial G}$, we have $v\leq u$. There are some equivalent descriptions of
 maximality which have been presented in \cite{Sad81}, \cite{Kli91}. We denote by
 	$MPSH(\Omega)$ the set of all maximal plurisubharmonic functions in $\Omega$.
 
   Sadullaev \cite{Sad81} has proved that,  if $u\in MPSH(\Omega)$ then for any $U\Subset\Omega$,
   there exists  a sequence of  functions $PSH(U)\cap C(U)\ni u_j\searrow u$ such that 
   $(dd^c u_j)^n$ is weakly convergent to $0$ as $j\rightarrow\infty$. 
   If $u$ belongs in the
   domain of definition  of Monge-Amp\`ere operator (see \cite{Ceg04},
   \cite{Blo06}), it implies that $(dd^cu)^n=0$. By the comparison principle \cite{BT82}, it is also a sufficient condition
   for maximality.
   
   In general, a maximal plurisubharmonic function may belong outside the
  domain of definition  of Monge-Amp\`ere operator, i.e.,  
  there exists also  a sequence of  functions $PSH(U)\cap C(U)\ni v_j\searrow u$ such that 
  $(dd^c v_j)^n$ is not weakly convergent to $0$ as $j\rightarrow\infty$. For example, Cegrell \cite{Ceg86} has shown that 
  if $u=\log |z_1|^2+...+\log |z_n|^2$ and $v_j=\log (|z_1|^2+\dfrac{1}{j})+...+\log (|z_n|^2+\dfrac{1}{j})$ then
  $u$ is maximal in $\C^n$ but $(dd^cv_j)^n$ is not convergent to $0$. Blocki \cite{Blo09} has given another example 
  that $u=-\sqrt{\log |z| \log |w|}$ is maximal in $\Delta^2\setminus\{(0, 0)\}$ but $(dd^c\max\{u, -j\})^n$ is not
  convergent to $0$.\\

 In this paper, we show that, for any sequence $u_j$, the sequence of
  weighted Monge-Amp\`ere operators $(|u_j|+1)^{-a}(dd^cu_j)^n$ 
 is convergent to $0$ for all $a>n-1$.
 \begin{The}\label{main1}
 	Let $u$ be a negative maximal plurisubharmonic function in $\Omega\subset\C^n$ and let $U, \tilde{U}$ be open subset of
 	$\Omega$ such that $U\Subset\tilde{U}\Subset\Omega$. Assume that $u_j\in PSH^-(\tilde{U})\cap L^{\infty}(\tilde{U})$ is
 	decreasing to $u$ in $\tilde{U}$. Then
 	\begin{equation}\label{main1.eq}
 	\int\limits_U(|u_j|+1)^{-a}(dd^cu_j)^n\stackrel{j\to\infty}{\longrightarrow}0,\,\forall a>n-1.
 	\end{equation}
 \end{The}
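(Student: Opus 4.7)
The plan is to absorb the weight $(-u_j+1)^{-a}$ into a mixed Monge--Amp\`ere current built from bounded plurisubharmonic functions of $u_j$, and then to use Sadullaev's approximation to show this current vanishes in the limit.

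Since $a > n-1$, I can choose $b_1, \ldots, b_{n-1} > 1$ with $\sum_{i=1}^{n-1} b_i = a$. For $1 \leq i \leq n-1$ set $F_i(t) := \frac{(-t+1)^{1-b_i}}{b_i - 1}$, so $F_i$ is $C^2$, convex, increasing, and uniformly bounded on $(-\infty, 0]$, with $F_i'(t) = (-t+1)^{-b_i}$ and $F_i''(t) = b_i(-t+1)^{-b_i-1}$. Set also $F_n(t) := t$. Each $F_i(u_j)$ is plurisubharmonic on $\tilde U$, and the first $n-1$ are uniformly bounded in $j$. Expanding $\bigwedge_{i=1}^n dd^c F_i(u_j)$ via the chain rule $dd^c F_i(u_j) = F_i'(u_j)\,dd^c u_j + F_i''(u_j)\, du_j \wedge d^c u_j$ and using $(du_j \wedge d^c u_j)^2 = 0$, a direct calculation gives
\[
\bigwedge_{i=1}^n dd^c F_i(u_j) \;=\; (-u_j+1)^{-a}(dd^c u_j)^n + a(-u_j+1)^{-a-1}\, du_j \wedge d^c u_j \wedge (dd^c u_j)^{n-1},
\]
so in particular
\[
(-u_j+1)^{-a}(dd^c u_j)^n \;\leq\; \bigwedge_{i=1}^n dd^c F_i(u_j).
\]
Among these factors only $F_n(u_j) = u_j$ may fail to remain uniformly bounded as $j \to \infty$; hence the mixed Monge--Amp\`ere current on the right has at most one unbounded entry in the limit and is well defined in the Cegrell/Bedford--Taylor framework, and by monotone convergence for such currents it converges weakly on $\tilde U$ to $\bigwedge_{i=1}^n dd^c F_i(u)$.

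This reduces the theorem to proving $\bigwedge_{i=1}^n dd^c F_i(u) = 0$ on $U$. For this I would apply the same chain-rule expansion to Sadullaev's continuous sequence $w_k \in PSH(V) \cap C(V)$ (with $U \Subset V \Subset \tilde U$, $w_k \searrow u$, $(dd^c w_k)^n \rightharpoonup 0$): using the bounds $(-w_k+1)^{-a} \leq 1$ and $(-w_k+1)^{-a-1} \leq 1$,
\[
\bigwedge_{i=1}^n dd^c F_i(w_k) \;\leq\; (dd^c w_k)^n + a\, dw_k \wedge d^c w_k \wedge (dd^c w_k)^{n-1}.
\]
The first summand tends weakly to $0$ by Sadullaev's theorem; for the second, the substitution $(-w_k+1)^{-a-1}\,dw_k = d\psi(w_k)$ with $\psi(t) := (-t+1)^{-a}/a$ bounded converts it (after integration by parts against a cutoff $\rho$) into integrals of $\psi(w_k)$ tested against $(dd^c w_k)^n$, which again vanish in the limit. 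Monotone convergence then propagates the vanishing to $\bigwedge_{i=1}^n dd^c F_i(u)$.

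The core technical difficulty will be closing the estimate for the mixed gradient term: the natural Cauchy--Schwarz bound
\[
\Bigl|\int d\rho \wedge d^c w_k \wedge (dd^c w_k)^{n-1}\Bigr|^2 \leq \Bigl(\int d\rho \wedge d^c \rho \wedge (dd^c w_k)^{n-1}\Bigr)\Bigl(\int dw_k \wedge d^c w_k \wedge (dd^c w_k)^{n-1}\Bigr)
\]
reintroduces the quantity one is trying to control, so an absorption or self-improvement argument exploiting the decay of $(-w_k+1)^{-a-1}$ is needed. The exponent condition $a > n-1$ is precisely what allows splitting the weight into $n-1$ uniformly bounded factors and one ``free'' factor $F_n(t) = t$, which is the maximum number of unbounded entries (namely one) for which the mixed Monge--Amp\`ere current is unambiguously defined.
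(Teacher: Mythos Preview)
Your factorisation via the $F_i$ is correct, and the identity for $\bigwedge_{i} dd^c F_i(u_j)$ holds. But the proposal has a genuine gap at precisely the step you flag as the ``core technical difficulty'': controlling the term $\int \psi(w_k)\, d\rho \wedge d^c w_k \wedge (dd^c w_k)^{n-1}$ that remains after integration by parts. The Cauchy--Schwarz bound you write reintroduces $\int dw_k \wedge d^c w_k \wedge (dd^c w_k)^{n-1}$ on the right, while the companion factor $\int d\rho \wedge d^c\rho \wedge (dd^c w_k)^{n-1}$ carries no uniform bound, since Sadullaev's $w_k$ are not uniformly bounded as $k\to\infty$. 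Redistributing the weight $(-w_k+1)^{-a-1}$ between the two Cauchy--Schwarz factors does not help: the underlying obstruction is that a general maximal $u$ need \emph{not} satisfy $\int_{\{w_k>-t\}} dw_k\wedge d^c w_k\wedge(dd^c w_k)^{n-1}\to 0$ along approximating sequences. This is exactly what separates the paper's $M_1$ class from arbitrary maximal functions, and Blocki's example $u=-\sqrt{\log|z|\log|w|}$ shows the distinction is real. So the ``absorption or self-improvement'' you invoke is not available, and the argument does not close. A secondary concern is that your monotone-convergence step for the mixed current with one unbounded limit entry $F_n(u)=u$ lies outside the standard Bedford--Taylor theorem (which assumes all limits locally bounded) and would itself require justification.

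The paper's route is quite different and sidesteps the gradient difficulty entirely. It doubles the variables: one proves that $(z,w)\mapsto u(z)+v(w)$ lies in $M_1PSH(\Omega\times\Omega)$ for the explicit smooth maximal function $v(w)=|w_1|^2+\cdots+|w_{n-1}|^2+x_n+y_n-M$. Consequently $\Phi_\alpha(u(z)+v(w))=-(-u(z)-v(w))^\alpha$ is maximal and belongs to $D(\Omega\times\Omega)$ for every $0<\alpha<\tfrac{1}{2n}$, so $\int_{U\times U}(dd^c\Phi_\alpha(u_j(z)+v(w)))^{2n}\to 0$. Expanding this in product coordinates---where the $w$-factors are smooth and explicit, so the gradient contribution is harmless---gives $\int_U |u_j|^{-a}(dd^cu_j)^n\to 0$ for $a$ in a range near $2n$; combining this with the uniform bounds coming from $\Phi_\beta(u)\in D(\Omega)$ for $0<\beta<\tfrac1n$ and H\"older's inequality then covers all $a>n-1$. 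The doubling trick is precisely what converts the intractable gradient term into a computable one.
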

There is an interesting observation that if $u$ is maximal on every leaf of a holomorphic foliation on $\Omega$ then $u$ is maximal in 
$\Omega$. We generalize this observation as following:
  \begin{The}\label{main2}
  	Let $\Omega\subset\C^n$ be a domain and $1\leq q<n$. Let $\{T_j\}_{j\in J}$ be a family of
  	closed, positive currents of bidegree $(n-q, n-q)$ in $\Omega$ such that
  	\begin{center}
  		$\int\limits_{\Omega}T_j\wedge\omega^q>0,$
  	\end{center}
  	for all $j\in J$, where $\omega=dd^c|z|^2$. 
  	Assume also that, for any Borel set $E\subset\Omega$, if $\int\limits_ET_j\wedge\omega^q=0$ for all $j\in J$
  	then $\lambda (E)=0$, where $\lambda$ is Lebesgue measure. Let $u\in PSH(\Omega)$ and, for any $k\in\Z^+$, $j\in J$,
  	let $u_{jk}\in PSH(\Omega)\cap L^{\infty}(\Omega)$
  	such that $u_{jk}$ is decreasing to $u$ as $k\rightarrow\infty$. If
  	\begin{center}
  		$\lim\limits_{k\to\infty}\int\limits_{\Omega}T_j\wedge (dd^cu_{jk})^q=0,$
  	\end{center}
  	for all $j\in J$ then $u$ is maximal.
  \end{The}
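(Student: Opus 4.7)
The plan is to argue by contradiction. Suppose $u$ is not maximal: there exist $G\Subset\Omega$ and a plurisubharmonic $v$ in $G$, upper semi-continuous on $\bar G$, with $v\leq u$ on $\partial G$ and $v(z_0)>u(z_0)$ at some interior point $z_0$. By a standard equivalent form of maximality (truncation from below and a careful choice of $G$), I may assume $v$ is bounded. Fix $r>0$ with $G\subset B(z_0,r)$ and set $\tilde v := v+\epsilon(|z-z_0|^2-r^2)$ for a small $\epsilon>0$; then $\tilde v$ is still bounded plurisubharmonic in $G$, satisfies $\tilde v\leq v\leq u\leq u_{jk}$ on $\partial G$ for all $j,k$, and $\tilde v(z_0)>u(z_0)$ still holds. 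The crucial consequence of the perturbation is the pointwise lower bound on positive currents
\[
(dd^c\tilde v)^q\;\geq\;\epsilon^q\,\omega^q,
\]
obtained from $dd^c\tilde v=dd^c v+\epsilon\omega$ by keeping only the $\omega^q$ term in the multilinear expansion.

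Next I feed $\tilde v$ into the Bedford--Taylor mixed comparison principle. For each $j,k$, with $\tilde v$ and $u_{jk}$ bounded plurisubharmonic in $G$, $\tilde v\leq u_{jk}$ on $\partial G$, and $T_j$ closed positive of bidegree $(n-q,n-q)$, it gives
\[
\int_{\{\tilde v>u_{jk}\}}T_j\wedge(dd^c\tilde v)^q\;\leq\;\int_{\{\tilde v>u_{jk}\}}T_j\wedge(dd^c u_{jk})^q\;\leq\;\int_\Omega T_j\wedge(dd^c u_{jk})^q,
\]
which tends to $0$ as $k\to\infty$ by hypothesis. As $u_{jk}\searrow u$, the sets $\{\tilde v>u_{jk}\}$ increase to $\{\tilde v>u\}$, so monotone convergence together with the lower bound on $(dd^c\tilde v)^q$ yields
\[
\epsilon^q\int_{\{\tilde v>u\}}T_j\wedge\omega^q\;\leq\;\int_{\{\tilde v>u\}}T_j\wedge(dd^c\tilde v)^q\;=\;0
\]
for every $j\in J$. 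The support hypothesis on the family $\{T_j\}_{j\in J}$ therefore forces $\lambda(\{\tilde v>u\})=0$.

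To reach a contradiction, pick $c$ with $u(z_0)<c<\tilde v(z_0)$; upper semi-continuity of $u$ provides a small ball $B(z_0,\rho)\subset\{u<c\}$, and the sub-mean value inequality for the bounded plurisubharmonic $\tilde v$ at $z_0$ forces $\lambda(\{\tilde v>c\}\cap B(z_0,\rho))>0$ (otherwise the volume mean of $\tilde v$ over $B(z_0,\rho)$ would be $\leq c<\tilde v(z_0)$). Hence $\lambda(\{\tilde v>u\})>0$, the desired contradiction. The step I expect to be most delicate is the very first reduction: guaranteeing a bounded $v$ satisfying $\tilde v\leq u_{jk}$ on $\partial G$ is subtle when $u$ takes the value $-\infty$ on part of $\partial G$, and must be handled by a careful choice of $G$ (or a $\limsup$ version of the boundary inequality in the comparison principle). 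Once this setup is in place, the rest of the argument is a routine application of the mixed Bedford--Taylor comparison principle and the sub-mean property.
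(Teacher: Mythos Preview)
Your argument is essentially the paper's: perturb the competitor by $\epsilon(|z|^2-\mathrm{const})$, apply the mixed Bedford--Taylor comparison principle (Theorem~\ref{the.newcompa}) against each $u_{jk}$, pass to the limit $k\to\infty$ to get $\int_{\{u<\tilde v\}}T_j\wedge\omega^q=0$ for every $j$, and invoke the support hypothesis to conclude $\lambda(\{u<\tilde v\})=0$.

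On the boundary issue you correctly flagged, the paper disposes of it cleanly and you may want to adopt the same device: use the equivalent characterization of maximality in which the competitor $v$ belongs to $PSH(\Omega)$ with $v\leq u$ on $\Omega\setminus G$, normalize so that $u_{jk}\geq u+\tfrac{1}{k}$, and set
\[
v_{jk}(z)=\begin{cases} u_{jk}(z), & z\in\Omega\setminus G,\\ \max\{u_{jk}(z),\,v(z)+\epsilon(|z|^2-M)\}, & z\in G.\end{cases}
\]
Then $v_{jk}\in PSH(\Omega)\cap L^\infty(\Omega)$ automatically (it coincides with $u_{jk}$ near $\partial G$), so the comparison principle applies with no hypothesis on the values of $u$ along $\partial G$; this replaces your first reduction entirely.
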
 
  In \cite{Blo09}, Blocki has proved that 
   $u=-\sqrt{\log |z| \log |w|}$ is maximal in $\Delta^2\setminus\{(0, 0)\}$ but $(dd^c\max\{u, -j\})^n$ is not
  convergent to $0$. By using another method of calculation, we improve Blocki's result as the following:
  \begin{The}\label{main3}
  Let $\chi_m : (-\infty, 0)\rightarrow (-\infty, 0)$ be bounded convex non-decreasing functions such that $\chi_m$ is decreasing
  to $Id$ as $m\rightarrow\infty$. Let $u=-\sqrt{\log |z| \log |w|}$ and $u_m=\chi_m (u)$. Then $(dd^c u_m)^n$ is not weakly
  convergent to $0$ as $m\rightarrow\infty$.
  \end{The}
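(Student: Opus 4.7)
The plan is to exhibit an explicit compact $K\subset\Delta^2\setminus\{(0,0)\}$ on which $\int_K(dd^cu_m)^2$ admits a lower bound tending to a strictly positive limit as $m\to\infty$; this rules out weak convergence to zero (note $n=2$ here, since $u$ depends on two complex variables). First I would reduce to the smooth case: standard mollification (say inf-convolution) produces smooth convex non-decreasing $\chi_m^{\varepsilon}$ converging monotonically to $\chi_m$, so $u_m^{\varepsilon}=\chi_m^{\varepsilon}(u)$ is a monotone sequence of bounded plurisubharmonic functions converging to $u_m$; Bedford--Taylor continuity gives $(dd^cu_m^{\varepsilon})^2\to(dd^cu_m)^2$ weakly, and the Portmanteau theorem turns any uniform lower bound for $(dd^cu_m^{\varepsilon})^2(K)$ into one for $(dd^cu_m)^2(K)$. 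So I may assume $\chi_m$ is smooth.

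Next I would derive a clean density formula on the open set $\Omega^*=\Delta^*\times\Delta^*$ where $u$ is smooth. Writing $u=\phi(\alpha,\beta)$ with $\alpha=\log|z|$, $\beta=\log|w|$ and $\phi(\alpha,\beta)=-\sqrt{\alpha\beta}$, a direct check gives $\det\mathrm{Hess}_{\alpha,\beta}\phi\equiv 0$; for a toric plurisubharmonic function this is equivalent to $(dd^cu)^2\equiv 0$ on $\Omega^*$, so the chain rule reduces the Monge--Amp\`ere to its cross term
\[(dd^cu_m)^2=2\,\chi_m'(u)\,\chi_m''(u)\,du\wedge d^cu\wedge dd^cu\quad\text{on }\Omega^*.\]
Applying the matrix determinant lemma to the complex Hessian $\chi_m'(u)(u_{i\bar j})+\chi_m''(u)u_iu_{\bar j}$ of $u_m$ and inserting the derivatives of $\phi$ yields
\[(dd^cu_m)^2\big|_{\Omega^*}=\frac{C\,\chi_m'(u)\,\chi_m''(u)}{(-u)\,|z|^2|w|^2}\,d\lambda\]
for a universal positive constant $C$.

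Now take $K=\{|z|\le 1/2,\ 1/4\le|w|\le 1/2\}$, a compact subset of $\Delta^2\setminus\{(0,0)\}$ that touches the axis $\{z=0\}$. Since $(dd^cu_m)^2$ is nonnegative, any contribution carried by the axis is nonnegative, so it suffices to bound below the integral of the density above over $K\cap\Omega^*$. Passing to log-polar coordinates $a=-\log|z|$, $b=-\log|w|$ cancels the $|z|^2|w|^2$ factor, and the substitution $s=\sqrt{ab}$ (so $-u=s$) reduces the estimate to
\[\int_K(dd^cu_m)^2\ \ge\ C_1\!\int_{\log 2}^{\log 4}\frac{1}{b}\!\int_{\sqrt{b\log 2}}^{\infty}\chi_m'(-s)\,\chi_m''(-s)\,ds\,db.\]
Using $2\chi_m'\chi_m''=((\chi_m')^2)'$, the inner integral equals $\tfrac12\bigl[(\chi_m'(-\sqrt{b\log 2}))^2-(\chi_m'(-\infty))^2\bigr]$, and boundedness of $\chi_m$ (convex and non-decreasing with finite infimum) forces $\chi_m'(-\infty)=0$.

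Finally I would let $m\to\infty$. Convexity combined with $\chi_m\searrow\mathrm{Id}$ sandwiches $\chi_m'(t_0)$ at any $t_0<0$ between the left and right difference quotients of $\chi_m$, both of which tend to $1$, so $\chi_m'(t_0)\to 1$. Conversely, $\chi_m\ge\mathrm{Id}$ together with $\chi_m<0$ on $(-\infty,0)$ forces the endpoint value $\chi_m(0^-)=0$, and a tangent-line inequality then gives $\chi_m'(t)\le 1$ for every $t<0$. Dominated convergence therefore delivers
\[\liminf_{m\to\infty}\int_K(dd^cu_m)^2\ \ge\ \frac{C_1}{2}\int_{\log 2}^{\log 4}\frac{db}{b}=\frac{C_1\log 2}{2}>0,\]
contradicting weak convergence of $(dd^cu_m)^2$ to $0$. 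The main difficulty is the density formula in the second paragraph: the arithmetic check that the cross term $\chi_m'\chi_m''\,du\wedge d^cu\wedge dd^cu$ really has density of the form $1/((-u)|z|^2|w|^2)$. Once that is in hand, the rest is Fubini and dominated convergence with the universal bounds $0\le\chi_m'\le 1$ and the pointwise convergence $\chi_m'\to 1$.
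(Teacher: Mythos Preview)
Your proposal is correct and follows essentially the same approach as the paper: both reduce to smooth $\chi_m$, compute the density $(dd^c\chi_m(u))^2=\dfrac{\chi_m'(u)\chi_m''(u)}{2(-u)|z|^2|w|^2}\,dV_4$ on $(\Delta^*)^2$ via the identity $(dd^cu)^2=0$, integrate over a region where $|z|$ is small and $|w|$ is bounded away from $0$, use the antiderivative $2\chi_m'\chi_m''=((\chi_m')^2)'$ together with $\chi_m'(-\infty)=0$, and conclude from $\chi_m'\to 1$ pointwise. The only cosmetic differences are your choice of $K$ (a disk times an annulus rather than the paper's bidisk $\Delta_r\times\Delta_r(w_0)$) and your use of dominated convergence with the bound $\chi_m'\le 1$, whereas the paper evaluates its lower bound at the single point $\log r$ and passes to the limit directly.
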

   \section{On the weighted Monge-Amp\`ere operator}
   \subsection{A class of maximal plurisubharmonic functions}
    We say that a function $u\in PSH^{-}(\Omega)$ has $M_1$ property iff for 
    every open set $U\Subset\Omega$, there are $u_j\in PSH^{-}(U)\cap C(U)$ such that $u_j$ is decreasing to $u$ in $U$ and
    \begin{equation}\label{M1prop.eq}
    \lim\limits_{j\to\infty}\left(\int\limits_{U\cap\{u_j>-t\}}(dd^c u_j)^n
    +\int\limits_{U\cap\{u_j>-t\}}du_j\wedge d^cu_j\wedge(dd^cu_j)^{n-1}\right)=0,
    \end{equation}
    for any $t>0$. We denote by $M_1PSH(\Omega)$ the set of negative plurisubharmonic functions in $\Omega$ satisfying $M_1$ property.\\
    
    \begin{The}\label{M1.the}
    	Let $\Omega$ be a bounded domain in $\C^n$ and $u\in PSH^{-}(\Omega)$. Then the following conditions are equivalent
    	\begin{itemize}
    		\item [(i)] $u\in M_1PSH(\Omega)$.
    		\item [(ii)] $\chi(u)\in MPSH(\Omega)$ for any  convex non-decreasing function $\chi: \R\rightarrow\R$.
    		\item [(iii)] For any open sets $U,\tilde{U}$ such that $U\Subset\tilde{U}\Subset\Omega $, for any 
    		$u_j\in PSH^{-}(\tilde{U})\cap C(\tilde{U})$ such that $u_j$ is decreasing to $u$ in $\tilde{U}$, we have
    		\begin{center}
    			$\lim\limits_{j\to\infty}\left(\int\limits_U|u_j|^{-a}(dd^cu_j)^n
    			+\int\limits_U|u_j|^{-a-1}du_j\wedge d^cu_j\wedge (dd^cu_j)^{n-1}\right)=0,$
    		\end{center}
    		for all $a>n-1$.
    	\end{itemize}
    	In particular, $M_1$ property is a local notion and $M_1PSH(\Omega)\subset MPSH(\Omega)$.
    \end{The}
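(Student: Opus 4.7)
The plan is to close the cycle $(\text{i})\Rightarrow(\text{ii})\Rightarrow(\text{iii})\Rightarrow(\text{i})$. Once equivalence is established, the two ``in particular'' assertions follow immediately: locality of $M_1$ from the arbitrariness of $\tilde{U}$ and of the sequence in (iii), and $M_1PSH\subset MPSH$ by applying (ii) with $\chi=\mathrm{Id}$.

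For $(\text{i})\Rightarrow(\text{ii})$, fix a convex non-decreasing $\chi:\R\to\R$ and work locally on $U\Subset\Omega$. Since $MPSH$ is stable under decreasing limits of plurisubharmonic functions, it suffices to treat the truncation $\chi_T$ that equals $\chi$ on $[-T,0]$ and is constant on $(-\infty,-T]$ (smoothed if necessary). With $u_j$ furnished by the $M_1$ property, the chain rule
\[
(dd^c\chi_T(u_j))^n=(\chi_T'(u_j))^n(dd^cu_j)^n+n(\chi_T'(u_j))^{n-1}\chi_T''(u_j)\,du_j\wedge d^cu_j\wedge(dd^cu_j)^{n-1}
\]
supports both summands in $\{u_j>-T\}$ with coefficients bounded by constants depending only on $T$, so (1.1) forces $\int_U(dd^c\chi_T(u_j))^n\to 0$. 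As $\chi_T(u_j)\in PSH\cap C(U)$ with $\chi_T(u_j)\searrow\chi_T(u)$, this yields $\chi_T(u)\in MPSH(U)$.

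For $(\text{iii})\Rightarrow(\text{i})$, take a continuous decreasing approximation $u_j\searrow u$ (standard convolution). On $\{u_j>-t\}$ one has $|u_j|^{-a}\geq t^{-a}$ and $|u_j|^{-a-1}\geq t^{-a-1}$, so
\[
\int_{U\cap\{u_j>-t\}}(dd^cu_j)^n\leq t^{a}\int_U|u_j|^{-a}(dd^cu_j)^n\stackrel{j\to\infty}{\longrightarrow}0,
\]
and analogously for the second term; together these give (1.1).

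The substantive step is $(\text{ii})\Rightarrow(\text{iii})$. Given $a>n-1$, set $b=1-a/n$ and construct a bounded smooth convex non-decreasing $\chi$ with $\chi'(s)=b(-s)^{b-1}$ on $[-T,0]$, the tail on $(-\infty,-T]$ chosen so that $\chi$ remains convex and bounded. By (ii) and boundedness of $\chi(u)$, $(dd^c\chi(u))^n=0$; Bedford--Taylor continuity along the decreasing bounded sequence $\chi(u_j)\searrow\chi(u)$ gives $(dd^c\chi(u_j))^n\to 0$ weakly. On $\{u_j>-T\}$ the chain rule reads
\[
(dd^c\chi(u_j))^n=b^n|u_j|^{-a}(dd^cu_j)^n+nb^n(1-b)|u_j|^{-a-1}du_j\wedge d^cu_j\wedge(dd^cu_j)^{n-1},
\]
so both weighted integrals restricted to $\{u_j>-T\}\cap U$ vanish in the limit. \textbf{The main obstacle} is controlling the tails $\{u_j\leq-T\}\cap U$: the direct bound $|u_j|^{-a}\leq T^{-a}$ combined with Chern--Levine--Nirenberg yields only $O(T^{n-a})$, which decays just for $a>n$. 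To cover the full range $a>n-1$, I plan to couple the two weighted integrals via the integration-by-parts identity
\[
\int_U|u_j|^{-a}(dd^cu_j)^n+a\int_U|u_j|^{-a-1}du_j\wedge d^cu_j\wedge(dd^cu_j)^{n-1}=(\text{boundary}),
\]
obtained from $d(|u_j|^{-a}d^cu_j\wedge(dd^cu_j)^{n-1})$, and estimate the boundary term through a Cauchy--Schwarz pairing that reintroduces the weight $|u_j|^{-a-1}$, whose exponent $a+1>n$ makes the dyadic tail analysis close.
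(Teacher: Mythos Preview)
Your arguments for $(\mathrm{iii})\Rightarrow(\mathrm{i})$ and $(\mathrm{i})\Rightarrow(\mathrm{ii})$ match the paper's and are correct. The gap is in $(\mathrm{ii})\Rightarrow(\mathrm{iii})$. By truncating $\chi$ to be bounded you are forced into a tail estimate on $\{u_j\leq -T\}$, and the sketch you give does not close: with a cutoff $\eta$, the integration-by-parts identity produces a term $\int |u_j|^{-a}\,d\eta\wedge d^cu_j\wedge(dd^cu_j)^{n-1}$ whose Cauchy--Schwarz bound reintroduces the very weighted energy $\int |u_j|^{-a-1}du_j\wedge d^cu_j\wedge(dd^cu_j)^{n-1}$ that you are trying to control, now on a larger set; and the naive dyadic mass bound $\int_{\{u_j>-2^{k+1}\}}(dd^cu_j)^n=O(2^{kn})$ only sums for $a>n$. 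Making such a scheme converge in the critical range $n-1<a\leq n$ amounts to reproving by hand that $-(-u)^{\alpha}$ lies in the domain of definition of the Monge--Amp\`ere operator for $\alpha<1/n$, which is a nontrivial theorem you have not supplied.

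The paper bypasses the tail entirely by using the \emph{unbounded} weight $\Phi_\alpha(t)=-(-t)^\alpha$, $0<\alpha<\tfrac{1}{n}$, and invoking the Bedford/B\l{}ocki result that $\Phi_\alpha(u)\in D(\Omega)$. This is precisely the missing input: membership in $D(\Omega)$ means $(dd^c\Phi_\alpha(u_j))^n\to(dd^c\Phi_\alpha(u))^n$ for \emph{every} decreasing approximation $u_j\searrow u$, and since $\Phi_\alpha(u)$ is maximal by (ii), the limit is $0$. Expanding via the chain rule yields both weighted integrals in (iii) with $a=n(1-\alpha)\in(n-1,n)$; the range $a\geq n$ then follows because eventually $|u_j|$ is uniformly bounded below on $\bar U$. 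No truncation, no tail analysis.
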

    \begin{proof}
    	$(iii\Rightarrow i)$: Obvious.\\
    	$(i\Rightarrow ii)$:\\
    	Assume that $U\Subset\tilde{U}\Subset\Omega$. Let
    	$u_j\in PSH^{-}(U)\cap C(U)$ such that $u_j$ is decreasing to $u$ in $U$ and the condition \eqref{M1prop.eq} is satisfied.
    	
    	If $\chi$ is smooth and $\chi$ is constant in some interval $(-\infty, -m)$ then
    	\begin{flushleft}
    		$\begin{array}{ll}
    		(dd^c\chi(u_j))^n &=(\chi '(u_j))^n(dd^cu_j)^n+n\chi ''(u_j)(\chi '(u_j))^{n-1}du_j\wedge d^cu_j\wedge (dd^cu_j)^{n-1}\\[12pt]
    		&\leq C \boldsymbol{1}_{\{u_j>-t\}}(dd^cu_j)^n+C\boldsymbol{1}_{\{u_j>-t\}}du_j\wedge d^cu_j\wedge (dd^cu_j)^{n-1},
    		\end{array}$
    	\end{flushleft}
    	where $C,t>0$ depend only on $\chi$. Hence
    	\begin{center}
    		$\int\limits_U	(dd^c\chi(u_j))^n\stackrel{j\to\infty}{\longrightarrow}0.$
    	\end{center}
    	Then, $\chi (u)$ is maximal on $U$ for any open set $U\Subset\Omega$. Thus
    	$\chi (u)\in MPSH(\Omega)$.\\
    	In the general case, for any convex non-decreasing function $\chi$, we can find $\chi_l\searrow\chi$ such that $\chi_l$
    	is smooth, convex and $\chi|_{(-\infty, -m)}=const$ for some $m$.
    	 By above argument, $\chi_l\in MPSH(\Omega)$ for any $l\in\N$.
    	Hence $\chi (u)\in MPSH(\Omega).$\\
    	$(ii\Rightarrow iii)$:\\
    	For any $0<\alpha<\frac{1}{n}$, the function
    	\begin{center}
    		$\Phi_{\alpha}(t)=-(-t)^{\alpha}$
    	\end{center}
    	is convex and non-decreasing in $\R^-$. Assume that $u$ satisfies (ii), we have $\Phi_{\alpha}\in MPSH(\Omega)$.\\
    	
    	By \cite{Bed93} (see also \cite{Blo09}), for any $0<\alpha<\frac{1}{n}$, we have $\Phi_{\alpha}(u)\in D(\Omega)$. Then,
    	for any  $u_j\in PSH^{-}(\tilde{U})\cap C(\tilde{U})$ such that $u_j$ is decreasing to $u$ in $\tilde{U}$, we have
    	\begin{center}
    		$\int\limits_U(dd^c\Phi_{\alpha}(u_j))^n\stackrel{j\to\infty}{\longrightarrow}0, \forall 0<\alpha<\frac{1}{n},$
    	\end{center}
    	and it implies (iii).
    	
    	By $(i\Leftrightarrow ii)$, we conclude that $M_1PSH(\Omega)\subset MPSH(\Omega)$. 	Finally,  we need to show that
    	 $M_1$ property is a local notion. Assume that $u$ has local $M_1$ property. Let $U\Subset\tilde{U}\Subset\Omega$ be open sets.
    	 By the compactness of $\bar{U}$ and by the local $M_1$ property of $u$, there are open sets $U_1,...,U_m$,
    	 $\tilde{U}_1,...,\tilde{U}_m$ such that 
    	 \begin{center}
    	 	$\forall k=1,...,m: U_k\Subset\tilde{U}_k\Subset\tilde{U}$ and $u\in M_1PSH(\tilde{U}_k)$,
    	 \end{center}
     and
     \begin{center}
     	$\bar{U}\subset\bigcup\limits_{k=1}^mU_k.$
     \end{center}
 Let $u_j\in PSH^{-}(\tilde{U})\cap C(\tilde{U})$ such that $u_j$ is decreasing to $u$ in $\tilde{U}$, we have, by $(i\Leftrightarrow iii)$,
	\begin{center}
	$\lim\limits_{j\to\infty}\left(\int\limits_{U_k}|u_j|^{-a}(dd^cu_j)^n
	+\int\limits_{U_k}|u_j|^{-a-1}du_j\wedge d^cu_j\wedge (dd^cu_j)^{n-1}\right)=0,$
\end{center}
 for all $a>n-1$ and $k=1,...,m$. Hence,
	\begin{center}
	$\lim\limits_{j\to\infty}\left(\int\limits_U|u_j|^{-a}(dd^cu_j)^n
	+\int\limits_U|u_j|^{-a-1}du_j\wedge d^cu_j\wedge (dd^cu_j)^{n-1}\right)=0, \forall a>n-1.$
\end{center}
Then $u$ satisfies $(iii)$. Thus $u$ has $M_1$ property.
    \end{proof}
    The following proposition is an immediately corollary of Theorem \ref{M1.the}
    \begin{Prop}
    	Let $\Omega$ be a bounded domain in $\C^n$.
    	\begin{itemize}
    		\item [(i)] If $u\in M_1PSH(\Omega)$ then $\chi (u)\in M_1PSH(\Omega)$ 
    		for any  convex non-decreasing function $\chi: \R^-\rightarrow\R^-$.
    		\item [(ii)] If $u_j\in M_1PSH(\Omega)$ and $u_j$ is decreasing to $u$ then $u\in M_1PSH(\Omega)$.
    		\item[(iii)] Let $u\in PSH^-(\Omega)\cap C^2(\Omega\setminus F)$, where $F=\{z: u(z)=-\infty\}$ is closed. If
    		\begin{center}
    			$(dd^c u)^n=du\wedge d^cu\wedge (dd^c u)^{n-1}=0$
    		\end{center}
    		in $\Omega\setminus F$ then $u\in M_1PSH(\Omega)$.
    	\end{itemize}
    \end{Prop}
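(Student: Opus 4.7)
The plan is to deduce all three assertions from Theorem \ref{M1.the}, chiefly from the equivalence (i)$\Leftrightarrow$(ii) established there.

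For (i), the key observation is that if $\chi\colon\R^-\to\R^-$ and $\psi\colon\R\to\R$ are both convex and non-decreasing, then so is the composition $\psi\circ\chi$ on $\R^-$. Applying Theorem \ref{M1.the}(ii) to $u\in M_1PSH(\Omega)$ with the function $\psi\circ\chi$ yields $\psi(\chi(u))=(\psi\circ\chi)(u)\in MPSH(\Omega)$, and then the reverse direction of Theorem \ref{M1.the}(ii), applied now to the plurisubharmonic function $\chi(u)$, gives $\chi(u)\in M_1PSH(\Omega)$. The only formal subtlety is that $\psi\circ\chi$ lives on $\R^-$ whereas Theorem \ref{M1.the}(ii) is stated for convex non-decreasing functions on $\R$; this issue is already tacit in the paper's own use of $\Phi_\alpha$ in the proof of (ii)$\Rightarrow$(iii), and is resolved either by extending affinely past $0$ when $(\psi\circ\chi)'_-(0)<\infty$ or by a routine approximation in the remaining case.

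For (ii), again via Theorem \ref{M1.the}(ii), it suffices to prove $\chi(u)\in MPSH(\Omega)$ for every convex non-decreasing $\chi$. Each $\chi(u_j)\in MPSH(\Omega)$ by Theorem \ref{M1.the}(ii) applied to $u_j$, and continuity of $\chi$ together with $u_j\searrow u$ gives $\chi(u_j)\searrow\chi(u)$. Maximality passes to decreasing limits by the usual one-line test: if $v$ is upper semicontinuous on $\bar G$ with $v\in PSH(G)$ and $v|_{\partial G}\leq\chi(u)|_{\partial G}$, then $v|_{\partial G}\leq\chi(u_j)|_{\partial G}$ for every $j$, maximality of $\chi(u_j)$ gives $v\leq\chi(u_j)$, and letting $j\to\infty$ yields $v\leq\chi(u)$ on $G$.

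For (iii), I verify Theorem \ref{M1.the}(ii) directly. Fix a convex non-decreasing $\chi$ and approximate it from above by a decreasing sequence $\chi_k\searrow\chi$ of smooth bounded convex non-decreasing functions (take $\tilde\chi_k=\max(\chi,-k)$ and mollify with a positive symmetric kernel of shrinking support, using that for convex $\tilde\chi_k$ convolution with such a kernel decreases to $\tilde\chi_k$); by the decreasing-limit argument just used in (ii) it is enough to prove $\chi_k(u)\in MPSH(\Omega)$. On $\Omega\setminus F$ the function $\chi_k(u)$ is $C^2$ and the pointwise identity
\begin{equation*}
(dd^c\chi_k(u))^n=(\chi_k'(u))^n(dd^cu)^n+n\chi_k''(u)(\chi_k'(u))^{n-1}\,du\wedge d^cu\wedge (dd^cu)^{n-1}
\end{equation*}
shows $(dd^c\chi_k(u))^n=0$ there by hypothesis. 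Since $F$ is closed and pluripolar and $\chi_k(u)$ is bounded plurisubharmonic on $\Omega$, the measure $(dd^c\chi_k(u))^n$ places no mass on $F$ by Bedford--Taylor \cite{BT82}, so it vanishes on all of $\Omega$ and $\chi_k(u)\in MPSH(\Omega)$. The substantive technical point sits in (iii): absorbing the singular set $F$ into the Monge--Amp\`ere identity is what forces the reduction to bounded $\chi_k$ (to invoke that bounded plurisubharmonic functions do not charge pluripolar sets), and this is the only place where the $C^2$ hypothesis on $u$ outside $F$ is genuinely needed; the rest is purely formal once the equivalence of Theorem \ref{M1.the} is available.
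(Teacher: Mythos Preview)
Your argument is correct and matches the paper's intent: the proposition is stated there as an immediate corollary of Theorem~\ref{M1.the} with no further proof, and each of your three deductions runs through the characterization (ii) of that theorem exactly as one would expect. One small remark on part (iii): an even more self-contained route, avoiding the appeal to \cite{BT82} on pluripolar sets, is to verify the defining condition~\eqref{M1prop.eq} directly with $u_j=\max(u,-j)$; for $j>t$ one has $\{u_j>-t\}=\{u>-t\}\subset\Omega\setminus F$ and $u_j=u$ on this open set, so both integrands vanish pointwise and the limit in~\eqref{M1prop.eq} is trivially zero.
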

    In some special cases, we can easily check $M_1$ property by the following criteria
    \begin{Prop}\label{M1crit.prop}
    	Let $\Omega$ be a bounded domain in $\C^n$. Let $\chi:\R\rightarrow\R$ be a smooth convex increasing function such that
    	$\chi'' (t)>0$ for any $t\in\R$. Assume also that $\chi$ is lower bounded.
    	 If $u\in PSH^-(\Omega)$ and $\chi (u)\in MPSH(\Omega)$ then $u\in M_1PSH(\Omega)$.
    \end{Prop}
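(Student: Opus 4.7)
The plan is to verify condition (i) of Theorem~\ref{M1.the} directly. Fix $U\Subset\Omega$, pick an intermediate open set $U\Subset U'\Subset\Omega$, and take a standard smoothing $u_j\in PSH^-(U')\cap C(U')$ decreasing to $u$ on $U'$ (e.g.\ by convolution with a mollifier, restricted to $U'$). I will bound the two integrals defining the $M_1$ property from above by a single quantity that tends to zero.

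The first observation is that $\chi(u)$ is a \emph{bounded} plurisubharmonic function: it is bounded below since $\chi$ is, and bounded above by $\chi(0)$ since $u\le 0$ and $\chi$ is non-decreasing. Being also maximal by hypothesis, the Bedford--Taylor comparison principle yields $(dd^c\chi(u))^n=0$. Moreover, $\chi(u_j)$ is a decreasing sequence of continuous, uniformly bounded plurisubharmonic functions converging to $\chi(u)$, so the Bedford--Taylor continuity theorem gives $(dd^c\chi(u_j))^n\to 0$ weakly on $U'$. Testing against a cutoff $\phi\in C_c(U')$ equal to $1$ on $\overline{U}$ then yields
$$\int_U(dd^c\chi(u_j))^n\to 0.$$

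The key computation is the chain-rule identity
$$(dd^c\chi(u_j))^n=(\chi'(u_j))^n(dd^cu_j)^n+n(\chi'(u_j))^{n-1}\chi''(u_j)\,du_j\wedge d^cu_j\wedge(dd^cu_j)^{n-1},$$
obtained by expanding $dd^c\chi(u_j)=\chi'(u_j)\,dd^cu_j+\chi''(u_j)\,du_j\wedge d^cu_j$ and using $(du_j\wedge d^cu_j)^2=0$ in the binomial expansion. Both terms on the right are positive currents. Since $\chi$ is bounded below with $\chi''>0$ strict and $\chi$ increasing, one checks $\chi'>0$ everywhere on $\R$; thus for each fixed $t>0$, the continuous positive functions $\chi',\chi''$ satisfy
$$c_t:=\min_{s\in[-t,0]}\min\bigl(\chi'(s),\chi''(s)\bigr)>0,$$
so on the set $\{u_j>-t\}$ I obtain the Borel-measure inequality
$$(dd^c\chi(u_j))^n\geq c_t^n\bigl[(dd^cu_j)^n+n\,du_j\wedge d^cu_j\wedge(dd^cu_j)^{n-1}\bigr].$$
Integrating over $U\cap\{u_j>-t\}$ and combining with $\int_U(dd^c\chi(u_j))^n\to 0$ forces both summands in the $M_1$ condition \eqref{M1prop.eq} to vanish as $j\to\infty$, establishing (i) and hence $u\in M_1PSH(\Omega)$.

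The only genuine obstacle is justifying the chain-rule decomposition of $(dd^c\chi(u_j))^n$ when $u_j$ is merely continuous rather than $C^2$, but this is a routine application of Bedford--Taylor's approximation theorem: approximate $u_j$ by a decreasing sequence of smooth psh functions, apply the identity in the smooth case, and pass to the limit using continuity of mixed Monge--Amp\`ere products under decreasing limits of locally bounded psh functions. Every other step follows directly from results recalled in the introduction and earlier in this section.
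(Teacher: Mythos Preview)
Your proof is correct and follows essentially the same route as the paper: compute the chain-rule expansion of $(dd^c\chi(u_j))^n$, use that $\chi(u)$ is bounded and maximal so that $(dd^c\chi(u))^n=0$ and hence $\int_U(dd^c\chi(u_j))^n\to 0$, and then bound the two $M_1$ integrands from below on $\{u_j>-t\}$ using the strict positivity of $\chi'$ and $\chi''$ on the compact interval $[-t,0]$. Your presentation supplies a few details the paper leaves implicit (why $\chi'>0$, and the Bedford--Taylor approximation needed for the chain rule when $u_j$ is only continuous), but the argument is the same.
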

    \begin{proof}
    	Let $U\Subset\tilde{U}\Subset\Omega$ and $u_j\in PSH(\tilde{U})\cap C(\tilde{U})$ such that $u_j$ is decreasing to $u$. Then
    	\begin{center}
    		$dd^c(\chi(u_j))=\chi'(u_j)dd^cu_j+\chi''(u_j)du_j\wedge d^cu_j$
    	\end{center}
    	and
    	\begin{center}
    		$	(dd^c\chi(u_j))^n=(\chi '(u_j))^n(dd^cu_j)^n+n\chi''(u_j)(\chi'(u_j))^{n-1} du_j\wedge d^cu_j\wedge (dd^c u_j)^{n-1}.$
    	\end{center}
    For any $t>0$, there exists $C>0$ depending only on $t$ and $\chi$ such that
    \begin{equation}\label{M1crit.eq1}
    	(dd^c\chi(u_j))^n
    	\geq C \boldsymbol{1}_{\{u_j>-t\}}(dd^cu_j)^n+C \boldsymbol{1}_{\{u_j>-t\}} du_j\wedge d^cu_j\wedge (dd^c u_j)^{n-1}.
    \end{equation}
    Note that $\chi (u)\in D(\Omega)\cap MPSH(\Omega)$. Hence
    \begin{equation}\label{M1crit.eq2}
    \lim\limits_{j\to\infty}\int\limits_U(dd^c\chi(u_j))^n=0.
    \end{equation}	
    Combining \eqref{M1crit.eq1} and \eqref{M1crit.eq2}, we have
    \begin{center}
    	$\lim\limits_{j\to\infty}\left(\int\limits_{U\cap\{u_j>-t\}}(dd^c u_j)^n
    	+\int\limits_{U\cap\{u_j>-t\}}du_j\wedge d^cu_j\wedge(dd^cu_j)^{n-1}\right)=0.$
    \end{center}
    Thus $u\in M_1PSH(\Omega)$.
    \end{proof}
    \begin{Ex}
    	(i) If $u$ is a negative plurisubharmonic function in $\Omega\subset\C^n$ depending only on $n-1$ variables then $u$ has $M_1$
    	property.\\
    	(ii) If $f: \Omega\rightarrow \C^n$ is a holomorphic mapping of rank $<n$ then $(dd^c|f|^2)^n=0$
    	 (see, for example, in \cite{Ras98}). Then, by Proposition \ref{M1crit.prop}, $\log |f|\in M_1PSH(\Omega)$
    	 if it is negative in $\Omega$.
    \end{Ex}
   \subsection{Doubling variables technique} 
   \begin{The}\label{app1}
   	If $u,v\in MPSH(\Omega)$ and $\chi: \R\rightarrow\R$ is a convex non-decreasing function then
   	$(z,w)\mapsto\chi (u(z)+v(w))\in MPSH(\Omega\times\Omega)$.
   \end{The}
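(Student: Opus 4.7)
The plan is to prove Theorem~\ref{app1} by showing that $(z,w)\mapsto u(z)+v(w)$, suitably shifted to be negative, belongs to $M_1PSH$ on the doubled domain, and then applying (i)$\Rightarrow$(ii) of Theorem~\ref{M1.the} with the convex non-decreasing function $\tilde\chi(t)=\chi(t+C)$. Since both maximality and $M_1$ are local notions, it is enough to fix $U\Subset\tilde U\Subset\Omega$ with $\overline{\tilde U}$ compact and a constant $C>0$ such that $u+v\le -1$ on $\tilde U\times\tilde U$, and verify the $M_1$-condition for $u+v-C$ on $U\times U$.

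I double the variables: by Sadullaev's theorem, pick $u_j\in PSH^-(\tilde U)\cap C(\tilde U)$ and $v_j\in PSH^-(\tilde U)\cap C(\tilde U)$ decreasing to $u$ and $v$ with $(dd^c u_j)^n$ and $(dd^c v_j)^n$ weakly convergent to zero, and set $F_j(z,w)=u_j(z)+v_j(w)-C$. Because $u_j$ depends only on $z$ and $v_j$ only on $w$, a bi-degree count collapses the binomial expansion of $(dd^c F_j)^{2n}$ to the single middle term,
\[
(dd^c F_j)^{2n}=\binom{2n}{n}(dd^c u_j)^n\wedge(dd^c v_j)^n,
\]
and shows that the mixed terms $du_j\wedge d^c v_j$ vanish for dimensional reasons, leaving
\[
dF_j\wedge d^c F_j\wedge(dd^c F_j)^{2n-1}=\binom{2n-1}{n-1}du_j\wedge d^c u_j\wedge(dd^c u_j)^{n-1}\wedge(dd^c v_j)^n+\binom{2n-1}{n}(dd^c u_j)^n\wedge dv_j\wedge d^c v_j\wedge(dd^c v_j)^{n-1}.
\]

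The inclusion $\{F_j>-t\}\subset\{u_j>-t\}\times\{v_j>-t\}$, valid because $u_j,v_j\le 0$, combined with Fubini gives
\[
\int_{(U\times U)\cap\{F_j>-t\}}(dd^c F_j)^{2n}\le\binom{2n}{n}\int_{U\cap\{u_j>-t\}}(dd^c u_j)^n\cdot\int_{U\cap\{v_j>-t\}}(dd^c v_j)^n\longrightarrow 0
\]
by Sadullaev. For the energy-type integral, the central estimate is a uniform-in-$j$ bound
\[
\int_{U\cap\{u_j>-t\}}du_j\wedge d^c u_j\wedge(dd^c u_j)^{n-1}\le C(t,U,\tilde U),
\]
obtained by applying the Chern--Levine--Nirenberg inequality to the truncation $\max(u_j,-t)$, which coincides with $u_j$ on $\{u_j>-t\}$ and has $L^\infty$-norm at most $t$. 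Pairing this with $\int(dd^c v_j)^n\to 0$ (and symmetrically for the other term) forces the energy integral to zero. Hence $u+v-C\in M_1PSH(U\times U)$, so by Theorem~\ref{M1.the} applied with $\tilde\chi$, $\chi(u+v)$ is maximal on $U\times U$; since maximality is local, $\chi(u+v)\in MPSH(\Omega\times\Omega)$.

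The main obstacle is obtaining the uniform-in-$j$ energy bound on $\{u_j>-t\}$: the Sadullaev approximants need not be uniformly bounded on $\tilde U$ (since $u,v$ may be unbounded), so a direct CLN estimate on $u_j$ itself fails. The truncation trick converts this into a $t$-dependent but $j$-uniform bound, which is exactly what the $M_1$-definition tolerates because each $t>0$ is tested separately. The bi-degree vanishing that makes all other terms drop out is the routine structural feature that makes \emph{doubling of variables} effective.
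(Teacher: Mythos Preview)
Your argument is correct, and the overall architecture—reduce to a local statement, show $u(z)+v(w)$ has the $M_1$ property via product-type approximants, then invoke Theorem~\ref{M1.the}—matches the paper exactly. The binomial/bi-degree computations are identical to the paper's.

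The genuine difference is in the choice of approximants and how the energy term is handled. You invoke Sadullaev to get $u_j,v_j$ with $(dd^cu_j)^n\to 0$ and $(dd^cv_j)^n\to 0$ weakly; this forces you to confront the ``main obstacle'' you describe, namely a uniform-in-$j$ bound on $\int_{\{u_j>-t\}}du_j\wedge d^cu_j\wedge(dd^cu_j)^{n-1}$, which you obtain by truncating to $\max(u_j,-t)$ and applying a Chern--Levine--Nirenberg-type energy estimate. This is valid.

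The paper sidesteps this entirely: instead of appealing to Sadullaev, it takes \emph{any} continuous decreasing $u_j,v_j$ and then solves the homogeneous Dirichlet problem on balls (via \cite{Wal68}) to produce $\tilde u_j,\tilde v_j$ with $(dd^c\tilde u_j)^n\equiv 0$ on $U$ and $(dd^c\tilde v_j)^n\equiv 0$ on $V$; maximality of $u,v$ guarantees $\tilde u_j\searrow u$, $\tilde v_j\searrow v$. With these approximants both $(dd^cF_j)^{2n}$ and $dF_j\wedge d^cF_j\wedge(dd^cF_j)^{2n-1}$ vanish \emph{identically} on $U\times V$, so the $M_1$ condition is immediate with no limit argument and no CLN bound. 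In effect the paper absorbs your ``main obstacle'' into the construction of the approximants rather than estimating it afterwards. Your route is slightly more elementary in that it avoids the Dirichlet problem, at the cost of the extra CLN/truncation step; the paper's route is shorter and makes the obstacle disappear.
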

\begin{proof}
   	Without loss of generality, we can assume that $u,v\in PSH^-(\Omega)$. 
   	
   	Let $z_0, w_0\in\Omega$ and open balls $U, \tilde{U}, V, \tilde{V}$ such that
   	$z_0\in U\Subset\tilde{U}\Subset\Omega$, $w_0\in V\Subset\tilde{V}\Subset\Omega$. We have $u\in MPSH(\tilde{U})$
   	and $v\in MPSH(\tilde{V})$. We will show that $u(z)+v(w)$ have $M_1$ property in $U\times V$.
   	
   	Let $u_j\in PSH^-(\tilde{U})\cap C(\tilde{U})$ and $v_j\in PSH^-(\tilde{V})\cap C(\tilde{V})$ such that
   	$u_j$ is decreasing to $u$ in $\tilde{U}$ and $v_j$ is decreasing to $v$ in $\tilde{V}$. By \cite{Wal68},
   	there are $\tilde{u}_j\in PSH^-(\tilde{U})\cap C(\tilde{U})$, $\tilde{v}_j\in PSH^-(\tilde{V})\cap C(\tilde{V})$
   	such that
   	\begin{center}
   		$\begin{cases}
   		\tilde{u_j}=u_j\quad\mbox{ in }\tilde{U}\setminus U,\\
   		\tilde{v_j}=v_j\quad\mbox{ in }\tilde{V}\setminus V,\\
   		(dd^c\tilde{u}_j)^n=0\quad\mbox{ in }U,\\
   		(dd^c\tilde{v}_j)^n=0\quad\mbox{ in }V.
   		\end{cases}$
   	\end{center}
   	By the maximality of $u$ and $v$, we conclude that $\tilde{u}_j$ is decreasing to $u$ in $\tilde{U}$ and
   	$\tilde{v}_j$ is decreasing to $v$ in $\tilde{V}$. In $U\times V$, we have
   	\begin{flushleft}
   		$(dd^c(\tilde{u}_j(z)+\tilde{v}_j(w)))^{2n}=C^n_{2n}(dd^c\tilde{u}_j)^n_z\wedge (dd^c \tilde{v}_j)^n_w=0$\\[12pt]
   		$d(\tilde{u}_j(z)+\tilde{v}_j(w))\wedge d^c(\tilde{u}_j(z)+\tilde{v}_j(w))\wedge (dd^c(\tilde{u}_j(z)+\tilde{v}_j(w)))^{2n-1}$\\[12pt]
   		$= C^{n-1}_{2n-1}d_z\tilde{u}_j\wedge d^c_z\tilde{u}_j\wedge (dd^c \tilde{u}_j)^{n-1}_z\wedge (dd^c  \tilde{v}_j)^n_w$
   		$+C^{n-1}_{2n-1}d_w \tilde{v}_j\wedge d^c_w \tilde{v}_j\wedge (dd^c  \tilde{v}_j)^{n-1}_w\wedge (dd^c \tilde{u}_j)^n_z$\\[12pt]
   		$=0.$
   	\end{flushleft}
   	
   	Then $u(z)+v(w)$ has $M_1$ property in $U\times V$. By Theorem \ref{M1.the}, $M_1$ property is a local notion. Hence
   	$u(z)+v(w)\in M_1PSH(\Omega\times\Omega)$. And it implies that $u(z)+v(w)\in MPSH_{\chi}(\Omega\times\Omega)$
   	for any convex non-decreasing function $\chi$.
   	\end{proof}
   \subsection{Proof of Theorem \ref{main1}}
   Without loss of generality, we can assume that $u, u_j<0$. We need to show that 
   \begin{equation}\label{main1modified.eq}
   \int\limits_U|u_j|^{-a}(dd^cu_j)^n\stackrel{j\to\infty}{\longrightarrow}0,\,\forall a>n-1.
   \end{equation}
   	Let $v=|z_1|^2+...+|z_{n-1}|^2+x_n+y_n-M$, where $M=\sup\limits_{\Omega}(|z|^2+|x_n|+|y_n|)$. Then $v\in MPSH(\Omega)$.
   	By Theorem \ref{app1}, $\chi(u(z)+v(w))\in MPSH(\Omega\times\Omega)$ for any convex non-decreasing function $\chi$.
   	
   	By \cite{Bed93},\cite{Blo09}, for any $0<\alpha<\frac{1}{2n}$, we have $\Phi_{\alpha}(u(z)+v(w))\in D(\Omega\times\Omega)$,
   	where $\Phi_{\alpha}$ is defined as in the proof of Theorem \ref{M1.the}.\\
   	Then
   	\begin{center}
   		$\int\limits_{U\times U}(dd^c\Phi(u_j(z)+v(w)))^{2n}\stackrel{j\to\infty}{\longrightarrow}0,$
   	\end{center}
   	for any $0<\alpha<\frac{1}{2n}$. Hence
   	\begin{equation}\label{1proofapp2.eq}
   	\int\limits_U|u_j|^{-2n-1+2n\alpha}(dd^cu_j)^n\stackrel{j\to\infty}{\longrightarrow}0,\,\forall 0<\alpha<\frac{1}{2n}.
   	\end{equation}
   	Moreover, $\Phi_{\beta}(u)\in D(\Omega)$ for any $0<\beta<\frac{1}{n}$. Then, for any $0<\beta<\frac{1}{n}$,
   	there is $C_{\beta}>0$ such that
   	\begin{center}
   		$\int\limits_U(dd^c\Phi_{\beta}(u_j))^n\leq C_{\beta},\,\forall j>0.$
   	\end{center}
   	Hence
   	\begin{equation}\label{2proofapp2.eq}
   	\int\limits_U|u_j|^{-n+n\beta}(dd^cu_j)^n\leq C_{\beta},\,\forall j>0, \forall 0<\beta<\frac{1}{n}.
   	\end{equation}
   	Combining \eqref{1proofapp2.eq}, \eqref{2proofapp2.eq} and using H\"older inequality, we obtain \eqref{main1modified.eq}.
   \section{A sufficient condition for maximality}
   The purpose of this section is to give a proof of Theorem \ref{main2}. In order to prove Theorem \ref{main2}, we need to use
   the comparison principle.
   \subsection{Comparison principle} In \cite{BT82}, Bedford and Taylor have proved the following comparison principle
   \begin{The}\label{the.compa}
   			Let $\Omega\subset\C^n$ be a bounded domain. Let $u, v\in PSH(\Omega)\cap L^{\infty}(\Omega)$ such that
   			\begin{center}
   				$\liminf\limits_{\Omega\ni z\to\partial \Omega}(u(z)-v(z))\geq 0$.
   			\end{center}
   			Then
   			\begin{center}
   				$\int\limits_{u<v}(dd^c v)^n\leq\int\limits_{u<v}(dd^c u)^n.$
   			\end{center}
   	\end{The}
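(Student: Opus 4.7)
The plan is to prove the comparison principle via the classical integration-by-parts technique combined with a careful localization argument. By a standard regularization (decreasing convolutions with mollifiers together with the continuity of the Monge-Amp\`ere operator along decreasing sequences of continuous plurisubharmonic functions, which is Bedford-Taylor's approximation theorem), I first reduce to the case where $u$ and $v$ are continuous on $\bar{\Omega}$. Next, replacing $v$ with $v - \epsilon$ and eventually sending $\epsilon \to 0^+$, the hypothesis $\liminf_{\partial\Omega}(u - v) \geq 0$ guarantees that the open set $G_\epsilon := \{u < v - \epsilon\}$ is relatively compact in $\Omega$. Since $G_\epsilon$ increases to $\{u < v\}$ as $\epsilon \downarrow 0$, it suffices to establish the corresponding inequality on each $G_\epsilon$.

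For the compactly supported setting, I introduce the continuous plurisubharmonic function $w := \max(u + \epsilon, v)$ on $\Omega$. By construction $w = v$ on $G_\epsilon$ and $w = u + \epsilon$ outside $\bar{G}_\epsilon$, so the function $w - u - \epsilon$ has compact support in $\Omega$. Using the telescoping identity
\begin{equation*}
(dd^c w)^n - (dd^c u)^n \;=\; dd^c(w - u - \epsilon) \wedge \sum_{k=0}^{n-1} (dd^c w)^k \wedge (dd^c u)^{n-1-k},
\end{equation*}
together with two applications of Stokes' theorem (the boundary contributions vanishing by compact support and the closedness of each factor), I obtain $\int_\Omega (dd^c w)^n = \int_\Omega (dd^c u)^n$. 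Locality of the Monge-Amp\`ere operator for continuous plurisubharmonic functions then identifies $(dd^c w)^n$ with $(dd^c v)^n$ on the open set $G_\epsilon$ and with $(dd^c u)^n$ on $\Omega \setminus \bar{G}_\epsilon$; combining these facts yields
\begin{equation*}
\int_{G_\epsilon}(dd^c v)^n \;\leq\; \int_{\bar{G}_\epsilon}(dd^c u)^n \;\leq\; \int_{\{u<v\}}(dd^c u)^n,
\end{equation*}
and letting $\epsilon \downarrow 0$ concludes the argument.

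The main obstacle is twofold. First, justifying Stokes' theorem for mixed wedge products of non-smooth currents requires $u$ and $v$ to be at least continuous; the reduction from the merely bounded case rests on the deep Bedford-Taylor continuity of the Monge-Amp\`ere operator along decreasing sequences of continuous plurisubharmonic functions. Second, one must be wary of a possible concentration of $(dd^c u)^n$ on the level set $\partial G_\epsilon$, which is precisely the role of the $\epsilon$-perturbation: any such boundary mass is absorbed into the slightly enlarged set $\{u<v\}$ on the right-hand side, and the sharp inequality is then recovered in the limit $\epsilon \downarrow 0$ using monotone convergence for $(dd^c v)^n$ on the increasing family $G_\epsilon$.
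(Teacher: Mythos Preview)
Your argument is correct and follows essentially the same route as the paper. The paper does not prove Theorem~\ref{the.compa} directly (it is cited from \cite{BT82}), but it proves the generalized version with a closed positive current $T$ in the appendix (Theorem~\ref{the.newcompa.appendix}), and your proof specializes that argument. In the continuous case you and the paper do the same thing: your $w=\max(u+\epsilon,v)$ is exactly the paper's $v_\epsilon=\max\{v-\epsilon,u\}$ shifted by $\epsilon$, and your telescoping-plus-Stokes computation is equivalent to the paper's integration by parts against a cutoff $\phi\in C_0^\infty(\Omega)$ with $\phi\equiv 1$ on $\overline{G_\epsilon}$.

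The one place where the paper is substantially more careful is the reduction from bounded to continuous $u,v$. You invoke ``standard regularization'' in a single sentence, but passing the inequality through the limits $u_j\searrow u$, $v_k\searrow v$ is not just weak convergence of $(dd^c u_j)^n$: the sets $\{u_j<v_k\}$ do not match $\{u<v\}$, and characteristic functions are not admissible test functions. The paper's appendix handles this via quasi-continuity (Corollary~\ref{cor.quasicontinuous}) and the capacity estimate of Theorem~\ref{the.capconver}, replacing $u$ by a continuous extension $f$ off a set of small capacity and tracking the error. Your sketch is not wrong---this reduction is indeed classical---but it is worth noting that this step carries the real analytic weight of the theorem, and a complete proof cannot simply defer it.
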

   	This theorem has been generalized by Xing (1996) and then by Nguyen Van Khue and Pham Hoang Hiep (2009):
   	\begin{The}\label{the.xingcompa}\cite{Xing96}
   		Let $\Omega\subset\C^n$ be a bounded domain. Let $u, v$ be bounded plurisubharmonic functions in $\Omega$ such that
   		 \begin{center}
   		 	$\liminf\limits_{\Omega\ni z\to\partial \Omega}(u(z)-v(z))\geq 0$.
   		 \end{center}
   		 Then, for any constant $r\geq 1$ and all $w_1, ..., w_n\in PSH(\Omega, [0, 1])$,
   		 \begin{center}
   		 	$\dfrac{1}{(n!)^2}\int\limits_{\{u<v\}}(v-u)^ndd^cw_1\wedge...\wedge dd^cw_n
   		 	+\int\limits_{\{u<v\}}(r-w_1)(dd^cv)^n
   		 	\leq \int\limits_{\{u<v\}}(r-w_1)(dd^cu)^n.$
   		 \end{center}
   	\end{The}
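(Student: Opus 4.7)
The plan is to adapt Xing's original 1996 argument, which refines the Bedford--Taylor comparison principle by combining approximation, iterated integration by parts, and positivity of the weight $r - w_1$.

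First I would reduce to the case where $u,v$ are continuous on $\Omega$ and $u = v$ in some neighborhood of $\partial\Omega$. Continuity is achieved by standard regularization (Bedford--Taylor approximation by decreasing sequences of continuous PSH functions obtained via convolution), and the boundary matching is arranged by replacing $u$ with $u_\varepsilon := \max(u, v - \varepsilon)$ for small $\varepsilon > 0$. The hypothesis $\liminf_{z\to\partial\Omega}(u-v)\geq 0$ guarantees that $u_\varepsilon = u$ near $\partial\Omega$, while $u_\varepsilon = v - \varepsilon$ on $\{u < v - \varepsilon\}$, so $(dd^c u_\varepsilon)^n = (dd^c v)^n$ there. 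Proving the inequality for $u_\varepsilon$ and then sending $\varepsilon \to 0^+$ (using weak continuity of the Monge--Amp\`ere operator along decreasing sequences of bounded continuous PSH functions) will recover the general case.

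Next, the core step is an iterated integration by parts on the quantity
\[
I := \int_{\{u<v\}}(v-u)^n\, dd^cw_1\wedge\cdots\wedge dd^cw_n.
\]
Since $u = v$ near $\partial\Omega$, all boundary contributions in Stokes' formula vanish, so one may repeatedly swap factors: at each step one replaces a $dd^c w_i$ by $dd^c(v-u) = dd^c v - dd^c u$, paying with a gradient pairing $d(v-u)\wedge d^c(v-u)$ that exposes the positivity of $\{u<v\}$. After $n$ such steps the integrand is an alternating sum of wedges of $(dd^c u)^a\wedge(dd^c v)^{n-a}$ with coefficients controlled by binomial factors, and the $(n!)^2$ in front of $I$ emerges from the combinatorics. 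Using the positivity $r - w_1 \geq r - 1 \geq 0$ (valid because $w_1 \in [0,1]$ and $r \geq 1$) as a weight in the final integration by parts, one can absorb the $(r - w_1)$ factor into the comparison between $(dd^c u)^n$ and $(dd^c v)^n$ on $\{u < v\}$; the Bedford--Taylor inequality (Theorem~\ref{the.compa}) is recovered as the special case $r = 1$, $w_1 \equiv 0$, $w_2 = \cdots = w_n$ arbitrary.

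The main obstacle will be the integration by parts itself: for only bounded PSH $u, v, w_i$, the individual computations are justified only through the Bedford--Taylor machinery of convergence of Monge--Amp\`ere masses along decreasing approximations. Book-keeping the combinatorial constants to recover exactly $\tfrac{1}{(n!)^2}$, and justifying that the weight $(r - w_1)$ can be carried through the integration by parts without disturbing the sign, are the delicate technical points. Once the identity at smooth level is established, a straightforward approximation argument extends it to the bounded PSH setting, completing the proof.
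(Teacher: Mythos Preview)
The paper does not prove this theorem. Theorem~\ref{the.xingcompa} is stated with the citation \cite{Xing96} and is quoted only as background (alongside Theorems~\ref{the.compa} and \ref{the.npcompa}); the author's own contribution in this section is the proof of Theorem~\ref{the.newcompa}. There is therefore no ``paper's own proof'' to compare your proposal against.

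For what it is worth, your outline is in the right spirit of Xing's original argument --- a reduction to the compactly supported case followed by iterated integration by parts --- but several details are off. Your reduction via $u_\varepsilon := \max(u, v-\varepsilon)$ does not converge back to the original inequality: on $\{u<v\}$ one has $v-u_\varepsilon = \min(v-u,\varepsilon)$, so the first integral on the left tends to $0$ rather than to $\int (v-u)^n\,dd^cw_1\wedge\cdots\wedge dd^cw_n$, and $(dd^c u_\varepsilon)^n$ equals $(dd^c v)^n$, not $(dd^c u)^n$, on $\{u<v-\varepsilon\}$. The standard reduction instead replaces $u$ by $u+\delta$ (so $\{u+\delta<v\}\Subset\Omega$) and works on that set. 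Also, the iterated integration by parts does not introduce gradient terms $d(v-u)\wedge d^c(v-u)$; rather, one moves $dd^c$ from $w_k$ onto $(v-u)^k$, lowering the exponent by one and producing a factor of $k$ together with $dd^c u - dd^c v$, which is where the weight $r-w_k$ and the positivity enter. The factorial constant comes from $k(k-1)\cdots 1$ in this descent, not from an alternating binomial expansion.
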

   	\begin{The}\label{the.npcompa}\cite{NP09}
   			Let $\Omega\subset\C^n$ be a bounded domain. Let $u, v\in \mathcal{E}(\Omega)$. Assume that
   			\begin{center}
   				$\liminf\limits_{\Omega\ni z\to\partial \Omega}(u(z)-v(z))\geq 0$.
   			\end{center}
   			Then, for any $r\geq 1$,
   			\begin{center}
   					$\dfrac{1}{k!}\int\limits_{\{u<v\}}(v-u)^kdd^cw_1\wedge...\wedge dd^cw_n
   					+\int\limits_{\{u<v\}}(r-w_1)(dd^cv)^k\wedge dd^cw_{k+1}\wedge...\wedge dd^cw_n
   					\leq\int\limits_{\{u<v\}\cup\{u=v=-\infty\}}(r-w_1)(dd^cu)^k\wedge dd^cw_{k+1}\wedge...\wedge dd^cw_n,$
   			\end{center}
   			where $1\leq k\leq n$, $w_1,...,w_k\in PSH(\Omega, [0,1])$, $w_{k+1},...,w_n\in \mathcal{E}(\Omega)$.
   	\end{The}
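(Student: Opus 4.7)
The overall strategy is to first establish the bounded analogue of the inequality by extending Xing's argument (Theorem \ref{the.xingcompa}) to mixed currents and to a general exponent $k$, and then to pass to the Cegrell class $\mathcal{E}(\Omega)$ via $\max$-truncation. The key external input for the limiting step will be Cegrell's continuity of the mixed Monge--Amp\`ere operator along decreasing sequences in $\mathcal{E}(\Omega)$.

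\textbf{Bounded, mixed version.} Assume first that $u,v$ and all $w_j$ are bounded PSH. Writing $\Theta=dd^cw_{k+1}\wedge\cdots\wedge dd^cw_n$, the plan is to prove
\[
\dfrac{1}{k!}\int\limits_{\{u<v\}}(v-u)^k dd^cw_1\wedge\cdots\wedge dd^cw_n+\int\limits_{\{u<v\}}(r-w_1)(dd^cv)^k\wedge\Theta\le\int\limits_{\{u<v\}}(r-w_1)(dd^cu)^k\wedge\Theta
\]
by replaying Xing's proof $k$ times with the inert current $\Theta$ riding along: each integration by parts converts one $dd^cv$ on the left into a $dd^cu$ on the right, producing a factor of $(v-u)$ (the $k!$ being accumulated after $k$ iterations), while the $r-w_1$ weight is handled exactly as in Xing. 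Since all functions are bounded, the polar set is empty and the integration takes place over $\{u<v\}$.

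\textbf{Passage to $\mathcal{E}(\Omega)$.} Next I truncate: $u_s=\max(u,-s)$, $v_s=\max(v,-s)$, and $w_j^s=\max(w_j,-s)$ for $j>k$; these are bounded PSH and decrease to the originals. Applying the bounded inequality to the truncated functions and letting $s\to\infty$, Cegrell's convergence theorem for mixed Monge--Amp\`ere operators gives weak convergence of the two current-type terms to those built from $u,v,w_j$, while $(v_s-u_s)^k\to(v-u)^k$ pointwise off the polar set with uniform local bounds on $\{u<v\}$; monotone/dominated convergence then handles the first integral on the left.

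\textbf{Main obstacle.} At a point of $\{u=v=-\infty\}$ one has $u_s=v_s=-s$, hence $\{u_s<v_s\}$ never meets this set, yet the target right-hand side must be integrated over $\{u<v\}\cup\{u=v=-\infty\}$. To recover this extra contribution I would apply the bounded inequality not to $(u_s,v_s)$ directly but to the perturbed pair $(u_s,v_s+\delta_s)$ with $\delta_s\downarrow 0$ chosen slowly enough that $\{u_s<v_s+\delta_s\}\supset\{u=v=-\infty\}$ for every $s$, and then run a diagonal limit in $(s,\delta_s)$. The delicate point is to verify that this joint limit does not overshoot: the left-hand side must not pick up extra mass from the enlarged domain, while the right-hand side must acquire precisely the full set $\{u<v\}\cup\{u=v=-\infty\}$. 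Since the weight $r-w_1$ is bounded between $r-1$ and $r$ and Cegrell's framework guarantees no mass of the mixed Monge--Amp\`ere currents escapes to the polar set in an uncontrolled way, this should go through, but this compatibility between the set expansion and the current convergence is in my view the technical heart of the proof.
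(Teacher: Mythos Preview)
The paper does not contain a proof of this statement: Theorem~\ref{the.npcompa} is quoted from \cite{NP09} purely as background (alongside the Bedford--Taylor and Xing comparison principles) and no argument for it is given anywhere in the text. Consequently there is nothing in the paper to compare your proposal against.

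For what it is worth, your outline is broadly in the spirit of how the result is actually proved in \cite{NP09}: one first establishes the mixed, weighted inequality for bounded plurisubharmonic functions by iterating an integration-by-parts argument \`a la Xing, and then passes to the class $\mathcal{E}(\Omega)$ by truncation and Cegrell's convergence theorems. You have correctly identified the genuine difficulty, namely that the truncated sublevel sets $\{u_s<v_s\}$ miss $\{u=v=-\infty\}$, so that some mechanism is needed to recover the mass of $(dd^cu)^k\wedge\Theta$ carried there. Your proposed fix via a perturbation $v_s+\delta_s$ is plausible but, as you acknowledge, the joint limit in $(s,\delta_s)$ is delicate and your sketch does not actually carry it out; in the original \cite{NP09} this step is handled with some care and is not a one-line remark. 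So as a proof your proposal is incomplete at precisely the point you flag, but as a plan it is on the right track --- only there is no ``paper's own proof'' here to measure it against.
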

   	In this section, we will use another improved version of Theorem \ref{the.compa}:
   	\begin{The}\label{the.newcompa}
   		Let $\Omega\subset\C^n$ be a bounded domain. Let $u, v\in PSH(\Omega)\cap L^{\infty}(\Omega)$ such that
   		\begin{center}
   			$\liminf\limits_{\Omega\ni z\to\partial \Omega}(u(z)-v(z))\geq 0$.
   		\end{center}
   		Assume that $T$ is a closed, positive current of bidegree $(n-q, n-q)$ in $\Omega$. Then
   		\begin{center}
   			$\int\limits_{u<v}(dd^c v)^q\wedge T\leq\int\limits_{u<v}(dd^c u)^q\wedge T.$
   		\end{center}
   	\end{The}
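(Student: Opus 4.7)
The plan is to adapt the classical Bedford-Taylor proof of Theorem~\ref{the.compa}, with the crucial modification that the ``background'' against which Monge-Amp\`ere powers are wedged is the fixed closed positive $(n-q,n-q)$-current $T$ rather than additional factors of $dd^c$. The principal new difficulty is that $T$ carries no a priori global mass bound, so every computation must be localized on a compact subset of $\Omega$ where the boundary hypothesis yields a strict inequality $u>v$.

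Two standard reductions come first. Replacing $v$ by $v-\epsilon$ for small $\epsilon>0$ and eventually sending $\epsilon\downarrow 0$, I may assume $\{u<v\}\Subset\Omega$; monotone convergence on the increasing family $\{u<v-\epsilon\}\nearrow\{u<v\}$ recovers the full statement. Via a convolution regularization on a slightly smaller subdomain I may further assume $u,v$ are continuous psh, invoking the weak continuity of $(dd^c\cdot)^q\wedge T$ along decreasing sequences of bounded psh functions to pass to the limit at the end.

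The core of the proof is a Stokes-type identity. Set $\phi=\max(u,v)$, which coincides with $u$ outside the compact set $K:=\overline{\{u<v\}}\Subset\Omega$ and with $v$ on the open set $\{u<v\}$. The telescoping formula
\[
(dd^c\phi)^q-(dd^cu)^q=dd^c(\phi-u)\wedge S,\qquad S:=\sum_{i=0}^{q-1}(dd^c\phi)^i\wedge(dd^cu)^{q-1-i},
\]
gives, after wedging with $T$, a signed measure supported in $K$. Picking a smooth cutoff $\chi$ compactly supported in $\Omega$ with $\chi\equiv 1$ on a neighborhood of $K$, integration by parts yields
\[
\int_\Omega dd^c(\phi-u)\wedge S\wedge T=\int_\Omega(\phi-u)\,dd^c\chi\wedge S\wedge T=0,
\]
since $dd^c\chi$ vanishes on a neighborhood of $\mathrm{supp}(\phi-u)\subset K$. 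Hence $\int_K(dd^c\phi)^q\wedge T=\int_K(dd^cu)^q\wedge T$.

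To conclude, split the left-hand side as $\int_{\{u<v\}}(dd^cv)^q\wedge T+\int_{K\setminus\{u<v\}}(dd^c\phi)^q\wedge T$, using that $\phi=v$ locally on the open set $\{u<v\}$. Discarding the non-negative second integral and noting that (after the reduction) $K\subset\{u\leq v-\epsilon\}\subset\{u<v\}$ bounds the right-hand side above by $\int_{\{u<v\}}(dd^cu)^q\wedge T$, one reaches the desired inequality; undoing the reduction $v\mapsto v-\epsilon$ completes the argument. The main technical obstacle is justifying the integration-by-parts step against an arbitrary closed positive current $T$, which is supplied by Bedford-Taylor-Demailly wedge-product theory: all mixed products of $dd^c$ of bounded psh functions with $T$ are closed positive currents for which Stokes against smooth compactly-supported test forms is legitimate.
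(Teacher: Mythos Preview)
Your Stokes-type argument for the continuous case is correct and essentially identical to the paper's Lemma~\ref{lem.compacontinuous}: both use $\phi=\max(u,v)$ (or $\max(u,v-\epsilon)$), the telescoping identity, and integration by parts against a cutoff that is $\equiv 1$ on $\overline{\{u<v\}}$, exploiting that $S\wedge T$ is closed. The $\epsilon$-shift and monotone-convergence wrap-up are also the same.

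The genuine gap is your second reduction. Writing ``invoking the weak continuity of $(dd^c\cdot)^q\wedge T$ along decreasing sequences'' is not enough to pass from the continuous case to general bounded psh. Weak convergence of the measures $(dd^c v_k)^q\wedge T\to(dd^c v)^q\wedge T$ and $(dd^c u_j)^q\wedge T\to(dd^c u)^q\wedge T$ does not control integrals over the \emph{varying} sets $\{u_j<v_k\}$, which are neither monotone in the pair $(j,k)$ nor open once $u,v$ cease to be continuous; and even for a fixed Borel set, weak convergence gives nothing. Concretely, you would need the two limits
\[
\liminf_{j,k}\int_{\{u_j<v_k\}}(dd^cv_k)^q\wedge T\ \ge\ \int_{\{u<v\}}(dd^cv)^q\wedge T,
\qquad
\limsup_{j,k}\int_{\{u_j<v_k\}}(dd^cu_j)^q\wedge T\ \le\ \int_{\{u<v\}}(dd^cu)^q\wedge T,
\]
and neither follows from weak convergence alone.

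This is precisely where the paper spends most of its effort. It introduces a $T$-relative capacity
\[
C(K,\Omega,T)=\sup\Bigl\{\int_K(dd^cw)^q\wedge T:\ w\in PSH(\Omega),\ 0\le w\le 1\Bigr\},
\]
proves a uniform-in-$v$ convergence statement (Theorem~\ref{the.capconver}) which yields quasicontinuity of bounded psh functions with respect to $C(\cdot,\Omega,T)$ (Corollary~\ref{cor.quasicontinuous}), and then runs a Tietze-extension argument: off an open set $G_\epsilon$ of small $T$-capacity, $u$ and $v$ are continuous, so the level sets $\{u_k<v\}$ and $\{u_k<f\}$ (with $f$ a continuous extension of $v|_{\Omega\setminus G_\epsilon}$) differ only inside $G_\epsilon$, where all relevant Monge--Amp\`ere masses are uniformly small. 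This is what allows the passage to the limit on both sides of the inequality. Your sketch skips this mechanism entirely; to complete the proof along your lines you must either reproduce it or invoke locality of $(dd^c\cdot)^q\wedge T$ in the plurifine topology to run the Stokes argument directly for bounded psh $u,v$ without regularizing.
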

   	The proof of Theorem \ref{the.newcompa} is straight forward from the classical result. The details of arguments can be found in
   	\cite{BT82} and some books (for example, \cite{Kli91} and \cite{Kol05}). For the reader's convenience, we also give the details
   	in the appendix.
   \subsection{Proof of Theorem \ref{main2}}
   	Without loss of generality, we can assume that $\Omega$ is bounded and $u_{jk}\geq u+\dfrac{1}{k}$ for any $j, k$.
   We will show that, for any open set $G\Subset\Omega$, if $v\in PSH(\Omega)$ satisfies
   $v\leq u$ on $\Omega\setminus G$ then $\int\limits_{\{u<v\}}T_j\wedge\omega^q=0$ for any $j\in J$.\\
   Denote $M=\sup\limits_{\Omega}|z|^2$ . For any $\epsilon>0$ and $j\in J$, we define
   \begin{center}
   	$v_{jk}(z)=
   	\begin{cases}
   	u_{jk}(z)\qquad\mbox{if}\quad z\in\Omega\setminus G,\\
   	\max {\{u_{jk} (z), v(z)+\epsilon (|z|^2-M)\}}\qquad\mbox{if}\quad z\in G.
   	\end{cases}$
   \end{center}
   By the comparison principle, we have,
   \begin{center}
   	$\int\limits_{\{u_{jk}<v_{jk}\}}(dd^c v_{jk})^q\wedge T_j\leq \int\limits_{\{u_{jk}<v_{jk}\}}(dd^c u_{jk})^q\wedge T_j.$
   \end{center}
   Hence
   \begin{center}
   	$\epsilon^q\int\limits_{\{u_{jk}<v +\epsilon(|z|^2-M)\}}\omega^q\wedge T_j\leq \int\limits_{\Omega}(dd^c u_{jk})^q\wedge T_j.$
   \end{center}
   Letting $k\rightarrow\infty$, we get 
   \begin{center}
   $\int\limits_{\{u<v +\epsilon(|z|^2-M)\}}\omega^q\wedge T_j=0$.
\end{center}
   Since $\epsilon$ and $j$ are arbitrary,
   we obtain 
   \begin{center}
   	$\int\limits_{\{u<v\}}\omega^q\wedge T_j=0$,
   \end{center} 
   for any $j\in J$. By the assumption,  we have $\lambda (\{u<v\})=0$.
   Hence $v\leq u$ in $\Omega$.\\
   The proof is completed.
   \section{On the Blocki's example}
  In this section, we give a proof of Theorem \ref{main3}. In calculating, we use the notation $d^c=i (\bar{\partial}-\partial)$.\\
  
  Denote $f(z)=-\sqrt{-\log |z|}$. We have, for any $z\in\Delta\setminus\{0\}$,
  \begin{center}
  	$\dfrac{\partial f}{\partial z}(z)=\dfrac{1}{4z\sqrt{-\log |z|}},$
  \end{center}
  and
  \begin{center}
  	$\dfrac{\partial^2 f}{\partial z\partial \bar{z}}(z)=\dfrac{1}{16 |z|^2(\sqrt{-\log |z|})^3}.$
  \end{center}
  Then
  \begin{equation}\label{eq.ff'ff''.sec3}
  f.\dfrac{\partial^2 f}{\partial z\partial \bar{z}}=-\left( \dfrac{\partial f}{\partial z}\right)^2.
  \end{equation}
  Note that $u(z, w)=-f(z).f(w)$. Hence, for any $(z, w)\in (\Delta\setminus\{0\})^2$,
  \begin{center}
  	$\partial u=-\dfrac{\partial f}{\partial z}(z). f(w)dz-\dfrac{\partial f}{\partial w}(w). f(z)dw,$
  \end{center}
  and
  \begin{center}
  	$	du\wedge d^c u  =2i \partial u\wedge \bar{\partial} u
  	=|\dfrac{\partial f}{\partial z}(z)|^2.(f(w))^2.2idz\wedge d\bar{z}
  	+|\dfrac{\partial f}{\partial w}(w)|^2.(f(z))^2.2idw\wedge d\bar{w}
  	+\dfrac{\partial f}{\partial z}(z).\dfrac{\partial f}{\partial \bar{w}}(w)f(z)f(w).2i dz\wedge d\bar{w}
  	+\dfrac{\partial f}{\partial w}(w).\dfrac{\partial f}{\partial \bar{z}}(z)f(z)f(w).2i dw\wedge d\bar{z},$\\
  \end{center}
  and
  \begin{center}
  	$	dd^c u =2i\partial\bar{\partial}u
  =-\dfrac{\partial^2 f}{\partial z\partial\bar{z}}(z)f(w).2i dz\wedge d\bar{z}
  -\dfrac{\partial^2 f}{\partial w\partial\bar{w}}(w)f(z).2i dw\wedge d\bar{w}
  -\dfrac{\partial f}{\partial z}(z)\dfrac{\partial f}{\partial \bar{w}}(w).2idz\wedge d\bar{w}
  -\dfrac{\partial f}{\partial w}(w)\dfrac{\partial f}{\partial \bar{z}}(z).2idw\wedge d\bar{z}.$\\
  \end{center}
  Then
  \begin{center}
  	$d u\wedge d^c u\wedge dd^c u=(2|\dfrac{\partial f}{\partial z}(z)|^2.|\dfrac{\partial f}{\partial w}(w)|^2f(z)f(w)
  	-|\dfrac{\partial f}{\partial z}(z)|^2.(f(w))^2.f(z)\dfrac{\partial^2 f}{\partial w\partial\bar{w}}(w)
  	-|\dfrac{\partial f}{\partial z}(w)|^2.(f(z))^2.f(w)\dfrac{\partial^2 f}{\partial z\partial\bar{z}}(z))
  	(2i dz\wedge d\bar{z})\wedge (2i dw\wedge d\bar{w}).$
  \end{center}
  By \eqref{eq.ff'ff''.sec3}, we have
  \begin{center}
  	$d u\wedge d^c u\wedge dd^c u=64 |\dfrac{\partial f}{\partial z}(z)|^2.|\dfrac{\partial f}{\partial w}(w)|^2f(z)f(w)dV_4
  	=\dfrac{dV_4}{4\sqrt{\log |z|\log |w|}|z|^2|w|^2},$
  \end{center}
  where $dV_4$ is the standard volume form in $\R^4$.\\
  
  Let $\chi : (-\infty, 0)\rightarrow (-\infty, 0)$ be a smooth, convex, non-decreasing function such that $\chi$ is constant
  in some interval $(-\infty, -M)$. We have
  \begin{center}
  	$d\chi (u)=\chi '(u)du, \qquad d^c\chi (u)=\chi'(u)d^c u,$
  \end{center}
  and
  \begin{center}
  	$dd^c\chi (u)=\chi ''(u)du\wedge d^cu+\chi '(u)dd^c u$.
  \end{center}
  Then
  \begin{flushleft}
  	$\begin{array}{ll}
  	(dd^c\chi (u))^2 &=(\chi'(u))^2(dd^c u)^2+2\chi'(u)\chi''(u)du\wedge d^cu\wedge dd^cu\\
  	&=2\chi'(u)\chi''(u)du\wedge d^cu\wedge dd^cu\\
  	&=\dfrac{\chi'(u)\chi''(u)}{2\sqrt{\log |z|\log |w|}|z|^2|w|^2}dV_4.
  	\end{array}$
  \end{flushleft}
Let $w_0\in\Delta\setminus\{0\}$ and $0<2r<\min\{|w_0|, 1-|w_0|\}$. We will estimate
\begin{center}
	$I:=I(\chi, w_0, r):=\int\limits_{\Delta_{r}\times\Delta_{r}(w_0)}\dfrac{\chi'(u)\chi''(u)}{2\sqrt{\log |z|\log |w|}|z|^2|w|^2}dV_4,$
\end{center}
where $\Delta_{r}=\{z\in\C: |z|<r\}$, $\Delta_{r} (w_0)=\{w\in\C: |w-w_0|<r\}$.\\
By using Fubini's theorem and changing to polar coordinates, we get
\begin{center}
	$I=\int\limits_{\Delta_{r}(w_0)}
	\int\limits_0^r\dfrac{2\pi\chi'(-\sqrt{-\log |w|}\sqrt{-\log t})\chi''(-\sqrt{-\log |w|}\sqrt{-\log t})}
	{\sqrt{-\log |w|}|w|^2.\sqrt{-\log t}.t}dt dV_2(w).$
\end{center}
Using the substitution $s=-\sqrt{-\log t}$, we have
\begin{flushleft}
	$\begin{array}{ll}
	I(\chi, w_0, r) &=\int\limits_{\Delta_{r}(w_0)}\dfrac{4\pi}{\sqrt{-\log |w|}|w|^2}
	\int\limits_{-\infty}^{-\sqrt{-\log r}}\chi'(\sqrt{-\log |w|}s)\chi''(\sqrt{-\log |w|}s)ds dV_2(w)\\
	&=\int\limits_{\Delta_{r}(w_0)}\dfrac{2\pi}{(-\log |w|)|w|^2}
\int\limits_{-\infty}^{-\sqrt{-\log r}}\dfrac{d(\chi'(\sqrt{-\log |w|}s))^2}{ds}dsdV_2(w)\\
&=\int\limits_{\Delta_{r}(w_0)}\dfrac{2\pi(\chi'(-\sqrt{\log |w|\log r}))^2}{(-\log |w|)|w|^2}dV_2(w)\\
&\geq \int\limits_{\Delta_{r}(w_0)}\dfrac{2\pi(\chi'(-\sqrt{\log (|w_0|-r)\log r}))^2}{(-\log (|w_0|-r))(|w_0|+r)^2}dV_2(w)\\
&\geq \dfrac{4\pi^2(\chi'(\log r))^2}{9(-\log r).r^2}.\\
	\end{array}$
\end{flushleft}
Now, we assume by contradiction that there exists a sequence of bounded convex non-decreasing functions
 $\chi_m : (-\infty, 0)\rightarrow (-\infty, 0)$  such that $\chi_m\searrow Id$ and $(dd^c\chi_m(u))^2\stackrel{w}{\rightarrow}0$
 as $m\rightarrow\infty$. Observe that for any $\chi_m$, there exists a sequence of smooth, convex, non-decreasing function
 $\chi_{mk}$ such that $\chi_{mk}\searrow\chi_m$ and $\chi_{mk}=const$ in some interval $(-\infty, -M_{mk})$. Replacing $\chi_m$ by 
 $\chi_{mk_m}$ with $k_m\gg 1$, we can assume that $\chi_m$ is smooth and $\chi_m=const$ in some interval $(-\infty, -M_m)$.
 
 Since $\chi_m\searrow Id$, we have $\lim\limits_{m\to\infty}\chi'(t)=1$ for all $t\in (-\infty, 0)$.
  Then, for any $w_0\in\Delta\setminus\{0\}$ and $0<2r<\min\{|w_0|, 1-|w_0|\}$,
  \begin{center}
  	$0=\lim\limits_{m\to\infty}(dd^c\chi_m(u))^2\geq\lim\limits_{m\to\infty}\dfrac{4\pi^2(\chi_m'(\log r))^2}{9(-\log r).r^2}
  	=\dfrac{4\pi^2}{9(-\log r).r^2}>0.$
  \end{center}
 We have a contradiction. The proof is completed.
 \appendix
 \section{Comparison principle}
 In this appendix, we will give the details of a proof of the following theorem:
 \begin{The}\label{the.newcompa.appendix}
 	Let $\Omega\subset\C^n$ be a bounded domain. Let $u, v\in PSH(\Omega)\cap L^{\infty}(\Omega)$ such that
 	\begin{center}
 		$\liminf\limits_{\Omega\ni z\to\partial \Omega}(u(z)-v(z))\geq 0$.
 	\end{center}
 	Assume that $T$ is a closed, positive current of bidegree $(n-q, n-q)$ in $\Omega$. Then
 	\begin{center}
 		$\int\limits_{\{u<v\}}(dd^c v)^q\wedge T\leq\int\limits_{\{u<v\}}(dd^c u)^q\wedge T.$
 	\end{center}
 \end{The}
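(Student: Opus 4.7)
The plan is to adapt the classical Bedford--Taylor proof, replacing the trivial background form by the closed positive current $T$. The two ingredients that must survive this replacement are a Stokes-type identity and the locality of the Monge--Amp\`ere operator on open sets; both are available in Bedford--Taylor theory for bounded psh functions once $T$ is incorporated as a closed positive background.

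First comes the boundary reduction: for any $\epsilon>0$, the hypothesis gives $u>v-\epsilon$ outside some compact subset of $\Omega$, so $\{u\leq v-\epsilon\}\Subset\Omega$, and we fix $\Omega_0$ with $\{u\leq v-\epsilon\}\Subset\Omega_0\Subset\Omega$. Set $\phi_\epsilon := \max(u,v-\epsilon) \in PSH(\Omega)\cap L^\infty(\Omega)$. Then $\phi_\epsilon = u$ on the closed set $\{u\geq v-\epsilon\}$, which contains a neighborhood of $\partial\Omega_0$, while $\phi_\epsilon = v-\epsilon$ on the open set $\{u<v-\epsilon\}$. A standard integration-by-parts argument (expand $(dd^c\phi_\epsilon)^q - (dd^c u)^q$ as a telescoping sum involving $dd^c(\phi_\epsilon - u)$, apply Stokes using closedness of $T$ and the fact that $\phi_\epsilon-u$ vanishes near $\partial\Omega_0$, and pass to the limit in a smooth decreasing psh approximation) yields
\[
\int_{\Omega_0}(dd^c\phi_\epsilon)^q\wedge T = \int_{\Omega_0}(dd^c u)^q\wedge T.
\]

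Next, locality of the Monge--Amp\`ere operator gives $(dd^c\phi_\epsilon)^q\wedge T = (dd^c u)^q\wedge T$ on the open set $\{u>v-\epsilon\}$ and $(dd^c\phi_\epsilon)^q\wedge T = (dd^c v)^q\wedge T$ on the open set $\{u<v-\epsilon\}$. Discarding the nonnegative contribution from $\{u=v-\epsilon\}$,
\[
\int_{\Omega_0}(dd^c\phi_\epsilon)^q\wedge T \geq \int_{\{u>v-\epsilon\}\cap\Omega_0}(dd^c u)^q\wedge T + \int_{\{u<v-\epsilon\}}(dd^c v)^q\wedge T,
\]
so combining with the Stokes identity and rearranging yields
\[
\int_{\{u\leq v-\epsilon\}}(dd^c u)^q\wedge T \geq \int_{\{u<v-\epsilon\}}(dd^c v)^q\wedge T.
\]
Since $\{u\leq v-\epsilon\}\subset\{u<v\}$ and $\{u<v-\epsilon\}\nearrow\{u<v\}$ as $\epsilon\searrow 0$, monotone convergence delivers the claim.

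The main obstacle is justifying the Stokes identity when $T$ is only a closed positive current of bidegree $(n-q,n-q)$, not a smooth form. For smooth psh factors the computation is immediate; for bounded psh $\phi_\epsilon$ and $u$ one must first regularize by smooth decreasing sequences and then pass to the limit, using the weak continuity of wedge products of the form $(dd^c\cdot)^k\wedge(dd^c\cdot)^{q-k}\wedge T$ along such sequences. This is the direct analog of the Bedford--Taylor convergence theorem with $T$ incorporated as a closed positive background, and it holds because the positivity of $T$ preserves the Chern--Levine--Nirenberg estimates that underlie the original argument.
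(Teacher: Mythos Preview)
Your approach is valid in spirit and genuinely different from the paper's. The paper first proves the inequality for \emph{continuous} $u,v$ (Lemma~A.4, via exactly your $\phi_\epsilon=\max(u,v-\epsilon)$ and Stokes), and then passes to general bounded psh functions by developing a $T$-capacity $C(\cdot,\Omega,T)$, proving a uniform convergence result (Theorem~A.2) and quasi-continuity (Corollary~A.3), and using these to control the errors on small-capacity sets when approximating $u,v$ by decreasing continuous psh functions. Your route skips all of that machinery and works directly with $u,v$, which is shorter and closer to Demailly's treatment.

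There is, however, one genuine gap: you assert that $\{u>v-\epsilon\}$ and $\{u<v-\epsilon\}$ are open sets and invoke Euclidean locality there. For merely upper-semicontinuous $u,v$ this fails in general; e.g.\ with $u\equiv 0$ the set $\{u<v-\epsilon\}=\{v>\epsilon\}$ need not be open since $v$ is only usc, not lsc. (This is precisely why the paper restricts Lemma~A.4 to continuous functions.) The fix is standard: either invoke Bedford--Taylor's plurifine locality (both sets are plurifine open, and $(dd^c\cdot)^q\wedge T$ is local in that topology), or use Demailly's inequality
\[
(dd^c\max(u,v-\epsilon))^q\wedge T \;\ge\; \mathbf{1}_{\{u\ge v-\epsilon\}}(dd^cu)^q\wedge T \;+\; \mathbf{1}_{\{u<v-\epsilon\}}(dd^cv)^q\wedge T,
\]
which together with your Stokes identity immediately gives $\int_{\{u\le v-\epsilon\}}(dd^cu)^q\wedge T\ge\int_{\{u\le v-\epsilon\}}(dd^cv)^q\wedge T$, and then your monotone-convergence finish applies verbatim. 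Your Stokes identity itself is fine: $\phi_\epsilon=u$ on a genuine Euclidean neighborhood of $\partial\Omega_0$, so the telescoping/integration-by-parts argument (with a cutoff $\chi\in C_0^\infty(\Omega_0)$, $\chi\equiv 1$ near the support of $\phi_\epsilon-u$) goes through without appealing to anything beyond Bedford--Taylor weak convergence with the closed positive background $T$. In short, your proof is correct modulo replacing ``Euclidean locality on open sets'' by one of the two standard substitutes above; the paper's longer detour through quasi-continuity buys self-containment at the cost of more machinery.
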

 The arguments used in this appendix are based on \cite{BT82}, \cite{Ceg88}, \cite{Kli91}, \cite{Dem}, \cite{Kol05}.
 \begin{Lem}\label{lem.weakconvergence}
 	Let $u_0,...,u_q$ be locally bounded plurisubharmonic functions in a domain $\Omega\subset\C^n$ ($0\leq q\leq n$) 
 	and let $u_0^k,...,u_q^k$ be decreasing sequences of plurisubharmonic functions converging pointwise to $u_0,...,u_q$. Let $T$ be
 	a closed, positive current of bidegree $(n-q, n-q)$ in $\Omega$. Then
 	$u_0^kdd^cu_1^k\wedge ...\wedge dd^cu_q^k\wedge T$ converges weakly to $u_0dd^cu_1\wedge ...\wedge dd^cu_q\wedge T.$
 \end{Lem}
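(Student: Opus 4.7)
The proof follows the classical Bedford–Taylor monotone-convergence argument in \cite{BT82}, carried through with the closed positive current $T$ playing the role of a passive wedge factor at each stage; none of the steps require anything special about $T$ beyond closedness, positivity, and the bidegree assumption. I would proceed by induction on $q$. The base case $q=0$ reduces to the weak convergence $u_0^k T \to u_0 T$: for any smooth compactly supported test form $\varphi$ of complementary bidegree, the sequence $u_0^k$ is locally uniformly bounded (above by $u_0^1$ and below by $u_0$), so the dominated convergence theorem applied to the trace measure associated with $\varphi \wedge T$ gives the claim.

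For the inductive step from $q-1$ to $q$, I would establish simultaneously two statements: first, that $S^k := dd^c u_1^k \wedge \cdots \wedge dd^c u_q^k \wedge T$ converges weakly to $S := dd^c u_1 \wedge \cdots \wedge dd^c u_q \wedge T$; and second, that $u_0^k S^k$ converges weakly to $u_0 S$. The first assertion follows from the identity
$$S^k = dd^c\bigl(u_q^k \, dd^c u_1^k \wedge \cdots \wedge dd^c u_{q-1}^k \wedge T\bigr)$$
together with the inductive hypothesis applied to the current inside $dd^c$ (with $u_q^k$ now playing the role of the factor outside $dd^c$), combined with the continuity of $dd^c$ on currents in the weak topology.

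For the second assertion, I would fix a smooth compactly supported test form $\varphi$ and decompose
$$\int \varphi \, u_0^k \, S^k - \int \varphi \, u_0 \, S = \int \varphi \, u_0 \, (S^k - S) + \int \varphi \, (u_0^k - u_0) \, S^k.$$
The first term tends to zero by the weak convergence of $S^k$ to $S$ already proved, after approximating the semicontinuous function $u_0$ from above by smooth plurisubharmonic functions $u_0 \ast \rho_\varepsilon \searrow u_0$ and using uniform mass bounds for $S^k$ on compact sets. The second term is handled by quasicontinuity of plurisubharmonic functions: outside an open set $U_\varepsilon$ of arbitrarily small Bedford–Taylor relative capacity, all the functions involved are continuous on $\mathrm{supp}\,\varphi \setminus U_\varepsilon$, so $u_0^k - u_0 \searrow 0$ uniformly there by Dini's theorem; while on $U_\varepsilon$ the contribution is controlled by the fact that $S^k$ puts uniformly small mass on sets of small capacity.

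The principal technical obstacle is this last uniform control: both the total mass of $S^k$ on compact subsets and the mass of $S^k$ on sets of small Bedford–Taylor capacity need to be bounded uniformly in $k$. Both estimates follow from iterated applications of the Chern–Levine–Nirenberg inequality through the recursive structure of $S^k$; at each step $T$ enters only through a fixed multiplicative constant, namely its mass on a slightly enlarged compact set, so the presence of $T$ does not disturb the inductive scheme of the classical proof.
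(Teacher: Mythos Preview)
The paper gives no proof of this lemma: it is stated without argument in the appendix, the surrounding material being attributed to \cite{BT82}, \cite{Ceg88}, \cite{Kli91}, \cite{Dem}, \cite{Kol05}. Your outline follows the classical Bedford--Taylor induction from those references, so in spirit you are reproducing exactly what the paper is invoking.

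There is, however, a genuine gap in your treatment of the term $\int\varphi\,(u_0^k-u_0)\,S^k$. You invoke quasicontinuity with respect to the Bedford--Taylor relative capacity and assert that $S^k$ puts uniformly small mass on sets of small capacity. For a general closed positive $T$ this is false: take $n=2$, $q=1$, $T=[z_1=0]$; then every $S^k=dd^cu_1^k\wedge[z_1=0]$ is supported on the pluripolar hyperplane $\{z_1=0\}$, which has zero $C_n$-capacity, yet $S^k$ carries positive mass. The Chern--Levine--Nirenberg inequality you cite yields uniform total-mass bounds, not absolute continuity with respect to $C_n$. The estimate you want \emph{does} hold for the $T$-relative capacity $C(\cdot,\Omega,T)$ introduced later in the appendix, but quasicontinuity with respect to that capacity is Corollary~\ref{cor.quasicontinuous}, which the paper deduces from Theorem~\ref{the.capconver}, whose proof already uses the present lemma; so that route is circular here. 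The proofs in \cite{BT82} and \cite{Dem} avoid quasicontinuity at this stage, obtaining the missing inequality by an integration-by-parts argument that exploits the symmetry between $u_0$ and $u_1$. A smaller issue: your induction as written does not close, since the hypothesis at level $q-1$ concerns top-degree currents while you need convergence of the $(n-1,n-1)$-current $u_q^k\,dd^cu_1^k\wedge\cdots\wedge dd^cu_{q-1}^k\wedge T$; this is fixed by inducting on the number $m\le q$ of $dd^c$-factors with $T$ fixed and test forms of bidegree $(q-m,q-m)$.
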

 \begin{Lem}\label{lem.gradient} Let $\Omega\subset\C^n$ be a domain. If $u, v\in C^2(\Omega)$ and
 	$T$ is a  positive current of bidegree $(n-1, n-1)$ in $\Omega$ satisfying
 	\begin{center}
 		$\int\limits_{\Omega}\omega\wedge T<\infty,$
 	\end{center}
 	then 
 	\begin{center}
 		$|\int\limits_{\Omega}du\wedge d^cv\wedge T|^2\leq \int\limits_{\Omega}du\wedge d^cu\wedge T
 		\int\limits_{\Omega}dv\wedge d^cv\wedge T,$
 	\end{center}
 	where $\omega=dd^c |z|^2$.
 \end{Lem}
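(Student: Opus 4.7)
The inequality is a Cauchy--Schwarz inequality for the real bilinear form $(u,v)\mapsto\int_{\Omega}du\wedge d^{c}v\wedge T$, and I would prove it by the standard polarization argument.

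The key pointwise fact is that for any real-valued smooth $\phi$ one has $d\phi\wedge d^{c}\phi=2i\,\partial\phi\wedge\bar{\partial}\phi\geq 0$ as a $(1,1)$-form; wedging with the closed positive $(n-1,n-1)$-current $T$ therefore produces a positive measure on $\Omega$. Applying this to $\phi=u+\lambda v$ for an arbitrary real parameter $\lambda$ and expanding gives
\[
d(u+\lambda v)\wedge d^{c}(u+\lambda v)\wedge T=du\wedge d^{c}u\wedge T+\lambda\bigl(du\wedge d^{c}v+dv\wedge d^{c}u\bigr)\wedge T+\lambda^{2}\,dv\wedge d^{c}v\wedge T.
\]
Next I would note that only the $(1,1)$-components of $du\wedge d^{c}v$ and $dv\wedge d^{c}u$ contribute after wedging with $T$: their $(2,0)$ and $(0,2)$ components lie in the forbidden bidegrees $(n{+}1,n{-}1)$ and $(n{-}1,n{+}1)$ once wedged with $T$. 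A direct check shows both $(1,1)$-components equal $i(\partial u\wedge\bar{\partial}v+\partial v\wedge\bar{\partial}u)$, so $du\wedge d^{c}v\wedge T=dv\wedge d^{c}u\wedge T$ as measures on $\Omega$. Integrating the displayed identity thus yields
\[
P(\lambda):=\int_{\Omega}d(u+\lambda v)\wedge d^{c}(u+\lambda v)\wedge T=A+2\lambda B+\lambda^{2}C\geq 0,\qquad\lambda\in\R,
\]
where $A,B,C$ denote the three integrals in the statement. Assuming $A,C<\infty$, the discriminant of the non-negative quadratic $P$ must be non-positive, giving exactly $B^{2}\leq AC$. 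If $A$ or $C$ equals $+\infty$, the inequality is trivial.

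The main thing requiring care is global integrability. The pointwise bound $2|du\wedge d^{c}v\wedge T|\leq (du\wedge d^{c}u+dv\wedge d^{c}v)\wedge T$, itself an instance of the same positivity applied to $\phi=u\pm v$, reduces the absolute convergence of $B$ to the finiteness of $A$ and $C$. These in turn are controlled by the hypothesis $\int_{\Omega}\omega\wedge T<\infty$ together with the $C^{2}$-regularity of $u,v$, which locally bounds the coefficients of $du\wedge d^{c}u$ and $dv\wedge d^{c}v$ by multiples of $\omega$; a routine compact exhaustion and monotone convergence promote this to $\Omega$ whenever the integrals are finite at all. Once integrability is in place, the polarization identity delivers the claim immediately; the rest of the argument is purely algebraic.
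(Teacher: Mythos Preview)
The paper does not supply a proof of this lemma; it is stated without argument, as one of the standard facts borrowed from the references \cite{BT82}, \cite{Ceg88}, \cite{Kli91}, \cite{Dem}, \cite{Kol05}. Your polarization argument is the standard way to obtain such a Cauchy--Schwarz inequality and is correct in substance: positivity of $T$ (closedness is neither assumed nor needed here, so you should drop that word) gives $d\phi\wedge d^{c}\phi\wedge T\geq 0$ for real $\phi$, the bidegree count kills the $(2,0)$ and $(0,2)$ pieces so that $du\wedge d^{c}v\wedge T=dv\wedge d^{c}u\wedge T$, and the nonnegative quadratic in $\lambda$ yields the discriminant inequality.

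One small point: your integrability paragraph is slightly misleading. The hypothesis $\int_{\Omega}\omega\wedge T<\infty$ does \emph{not} by itself force $A=\int_{\Omega}du\wedge d^{c}u\wedge T$ or $C$ to be finite, since $u,v\in C^{2}(\Omega)$ may have unbounded gradients near $\partial\Omega$; the local bound $du\wedge d^{c}u\leq C_K\,\omega$ on compacta $K\Subset\Omega$ does not globalize. Your case split is nonetheless sound: when $A,C<\infty$, the pointwise estimate $2|du\wedge d^{c}v\wedge T|\leq(du\wedge d^{c}u+dv\wedge d^{c}v)\wedge T$ (from $\phi=u\pm v$) makes $B$ absolutely convergent and the discriminant argument applies; when one of $A,C$ is infinite and the other nonzero the right-hand side is $+\infty$; and if, say, $A=0$ then $du\wedge d^{c}u\wedge T=0$ as a measure, and the same pointwise estimate on each compact $K$ gives $\int_K|du\wedge d^{c}v\wedge T|\leq\frac{\varepsilon}{2}\int_K dv\wedge d^{c}v\wedge T$ for every $\varepsilon>0$ (take $\phi=u\pm\varepsilon v$), hence $B=0$. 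In the paper's actual application (the proof of Theorem~\ref{the.capconver}) the integrals are taken over $\overline{B}_1$ with smooth integrands on a neighbourhood, so none of these edge cases arise.
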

 \begin{The}\label{the.capconver}
 	Let $T$ be a closed, positive current of bidegree $(n-q, n-q)$ in $B_2:=B(0, 2)\subset\C^n$ ($1\leq q\leq n$). 
 	If $u_j\in PSH(B_2)\cap L^{\infty}(B_2)$
 	is a decreasing sequence, convergent to $u\in PSH(B_2)\cap L^{\infty}(B_2)$, then
 	\begin{center}
 		$\lim\limits_{j\to\infty}\left(\sup \{\int\limits_{\overline{B}_1}(u_j-u)(dd^c v)^q\wedge T: v\in PSH(B_2), 0\leq v\leq 1
 		\}\right)=0.$
 	\end{center}
 \end{The}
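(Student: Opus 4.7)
The plan is to generalize the classical Bedford--Taylor capacity convergence theorem (\cite{BT82}; see also \cite{Kli91}, \cite{Kol05}) by carrying the closed positive current $T$ along as an inert factor: because $T$ is closed, every integration-by-parts identity I will need goes through exactly as in the standard case $T\equiv 1$.

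First, by a standard convolution regularization I would reduce to the case that $u_j$, $u$, $v$ are smooth on a slight shrinking of $B_2$, passing to the limit at the end via Lemma~\ref{lem.weakconvergence}. Fix a cutoff $\chi\in C_0^\infty(B_2)$ with $0\le\chi\le 1$ and $\chi\equiv 1$ on a neighborhood of $\overline{B}_1$, and set $w_j:=u_j-u\ge 0$. It suffices to prove that
\[
I_j(v):=\int\chi\,w_j\,(dd^c v)^q\wedge T\longrightarrow 0
\]
uniformly in $v\in PSH(B_2)$ with $0\le v\le 1$.

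I would then argue by induction on $q$. Applying Stokes' theorem to transfer one factor of $dd^c v$ gives
\[
I_j(v)=\int v\,dd^c(\chi w_j)\wedge(dd^c v)^{q-1}\wedge T,
\]
and expanding $dd^c(\chi w_j)=\chi\,dd^c w_j+d\chi\wedge d^c w_j+dw_j\wedge d^c\chi+w_j\,dd^c\chi$ produces four groups of terms. The two ``pure'' terms (no gradient on $w_j$) reduce, after a further integration by parts, to integrals of $w_j$ or $u_j-u$ against a closed positive current built from $dd^c u_j$, $dd^c u$, $dd^c v$ and $T$, all of which tend to zero by Lemma~\ref{lem.weakconvergence}. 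The two ``mixed'' gradient terms are controlled by the Cauchy--Schwarz inequality of Lemma~\ref{lem.gradient} in the form
\[
\left|\int v\,d\chi\wedge d^c w_j\wedge\beta\right|\le\Bigl(\int d\chi\wedge d^c\chi\wedge\beta\Bigr)^{1/2}\Bigl(\int v^2\,dw_j\wedge d^c w_j\wedge\beta\Bigr)^{1/2}
\]
with $\beta=(dd^c v)^{q-1}\wedge T$. The first factor is harmless—it depends only on $\chi$, $T$ and an integral of $(dd^c v)^{q-1}\wedge T$ over a compact set—and the ``energy'' factor $\int \chi\,dw_j\wedge d^c w_j\wedge\beta$ is reduced by yet another integration by parts to $\int w_j\,dd^c w_j\wedge\beta$ plus a boundary contribution, both of which are handled by Lemma~\ref{lem.weakconvergence} and a second application of Cauchy--Schwarz (absorbed back into the left-hand side).

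The main obstacle will be to maintain \emph{uniformity in} $v$ at every step. Every auxiliary integral of the shape $\int(dd^c v)^k\wedge T'$ or $\int dv\wedge d^c v\wedge\beta'$ that appears must be dominated by a constant depending only on $\chi$, $T$ and the sup-norms of $u_j,u$, never on $v$. The standard device here is the Chern--Levine--Nirenberg inequality, iterated from $0\le v\le 1$ and integration by parts (e.g. $\int\chi\,dv\wedge d^c v\wedge\beta'\le\|v\|_\infty\int v\,dd^c\chi\wedge\beta'$). Carefully tracking the combinatorics of the $q$-fold expansion, confirming that each coefficient in the final estimate is independent of $v$, and absorbing the energy terms back into the left-hand side, is the delicate part of the argument.
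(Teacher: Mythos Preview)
Your plan contains the right ingredients, but there is a genuine gap in the treatment of the ``pure'' terms. After you transfer one factor of $dd^c v$ onto $\chi w_j$, the dominant term is $\int v\,\chi\,dd^c w_j\wedge(dd^c v)^{q-1}\wedge T$. You claim this reduces, after a further integration by parts, to integrals of $w_j$ against a closed positive current, handled by Lemma~\ref{lem.weakconvergence}. But the only further integration by parts available moves $dd^c$ back onto $v\chi$, and the leading piece of $dd^c(v\chi)$ is $\chi\,dd^c v$, which returns exactly $I_j(v)$: the step is circular. Even splitting $dd^c w_j=dd^c u_j-dd^c u$ first lands you in the same place. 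And Lemma~\ref{lem.weakconvergence} is a weak-convergence statement for a \emph{fixed} test current; it says nothing about uniformity as $v$ ranges over $PSH(B_2,[0,1])$, which is precisely the point of the theorem.

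The paper's proof avoids both the cutoff and this circularity. One first arranges $u_j=u$ on $B_2\setminus\overline{B}_1$, so $u_j-u_k$ is compactly supported and no $\chi$ is needed. One then replaces $v$ by a regularized psh function $w_\epsilon$ built from $\max\{\varrho,(v+1)/2\}$ with $\varrho=10|z|^2-20$, together with a companion $\tilde w_\epsilon$ (from $\max\{\varrho,1\}$) that agrees with $w_\epsilon$ near $\partial B_{3/2}$ and is constant on $\overline{B}_1$. A single integration by parts and the substitution $d^c w_\epsilon\mapsto d^c(w_\epsilon-\tilde w_\epsilon)$ give
\[
\int_{\overline{B}_1}(u_j-u_k)(dd^c w_\epsilon)^q\wedge T
=-\int_{\overline{B}_1} d(u_j-u_k)\wedge d^c(w_\epsilon-\tilde w_\epsilon)\wedge(dd^c w_\epsilon)^{q-1}\wedge T,
\]
and now Cauchy--Schwarz (Lemma~\ref{lem.gradient}) is applied to the \emph{whole} integral, not just to cutoff cross-terms. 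Together with a Chern--Levine--Nirenberg bound this yields
$C\|T\|^{1/2}\bigl(\int_{\overline{B}_1}(u_j-u_k)\,dd^c u_k\wedge(dd^c w_\epsilon)^{q-1}\wedge T\bigr)^{1/2}$: one factor $dd^c w_\epsilon$ has been traded for $dd^c u_k$. Iterating $q$ times gives a bound $A\bigl(\int_{\overline{B}_1}(u_j-u_k)(dd^c u_k)^q\wedge T\bigr)^B$ with $A,B$ depending only on $n,q,\|T\|$, which is manifestly independent of $v$; one finishes by letting $\epsilon\to0$, $k\to\infty$, and applying dominated convergence. This ``swap $dd^c v$ for $dd^c u_k$ via Cauchy--Schwarz'' mechanism, applied to the entire expression rather than only to lower-order terms, is the idea your outline is missing.
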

 \begin{proof}
 	Without loss of generality, we can assume that $0<u_j, u<1$, $u_j$ are smooth and $u_j=u$ in $B_2\setminus\bar{B_1}$.\\
 	Take $v\in PSH(\Omega, (0, 1))$. We define $\varrho (z)=10 |z|^2-20$ and
 	\begin{center}
 		$w=\begin{cases}
 		\max\{\varrho, \frac{v+1}{2}\}\quad\mbox{in}\quad B_2,\\
 		\varrho\quad\mbox{in}\quad \C^n\setminus B_2,
 		\end{cases}$\\
 		$\tilde{w}=\max\{\varrho, 1\}.$
 	\end{center}
 	Then, $w, \tilde{w}\in PSH(\C^n)$ and $w=\tilde{w}=\varrho$ in $\C^n\setminus B_{3/2}$. If $\chi_{\epsilon}$ is the standard
 	smooth kernel and $0<\epsilon\ll 1$ then $w_{\epsilon}:=w*\chi_{\epsilon}=\tilde{w}*\chi_{\epsilon}=:\tilde{w}_{\epsilon}$
 	in $\C^n\setminus B_{3/2}$ and $\tilde{w}_{\epsilon}=1$ in a neighborhood of $\bar{B}_1$. \\
 	We have, for and $k,j\in\Z^+$,
 	\begin{flushleft}
 		$\begin{array}{ll}
 		\int\limits_{\bar{B}_1}(u_j-u_k)(dd^cw_{\epsilon})^q\wedge T
 		& =	\int\limits_{B_2}(u_j-u_k)(dd^cw_{\epsilon})^q\wedge T\\
 		& =-\int\limits_{B_2}d(u_j-u_k)d^cw_{\epsilon}(dd^cw_{\epsilon})^{q-1}\wedge T\\
 		& =-\int\limits_{B_2}d(u_j-u_k)d^c(w_{\epsilon}-\tilde{w}_{\epsilon})(dd^cw_{\epsilon})^{q-1}\wedge T\\
 		& =-\int\limits_{\bar{B}_1}d(u_j-u_k)d^c(w_{\epsilon}-\tilde{w}_{\epsilon})(dd^cw_{\epsilon})^{q-1}\wedge T.\\
 		\end{array}$
 	\end{flushleft}
 	By using Lemma \ref{lem.gradient} and Chern-Levine-Nirenberg theorem, we have, for any $k\geq j$,
 	\begin{flushleft}
 		$|\int\limits_{\bar{B}_1}d(u_j-u_k)d^c(w_{\epsilon}-\tilde{w}_{\epsilon})(dd^cw_{\epsilon})^{q-1}\wedge T|$\\
 		$\leq \left(\int\limits_{\bar{B}_1}d(u_j-u_k)d^c(u_j-u_k)(dd^cw_{\epsilon})^{q-1}\wedge T\right)^{1/2}$\\
 		$\times \left(\int\limits_{\bar{B}_1}d(w_{\epsilon}-\tilde{w}_{\epsilon})d^c(w_{\epsilon}-\tilde{w}_{\epsilon})
 		(dd^cw_{\epsilon})^{q-1}\wedge T\right)^{1/2}$\\
 		$\leq \left(-\int\limits_{\bar{B}_1}(u_j-u_k)dd^c(u_j-u_k)(dd^cw_{\epsilon})^{q-1}\wedge T\right)^{1/2}$\\
 		$\times \left(-\int\limits_{B_{3/2}}(w_{\epsilon}-\tilde{w}_{\epsilon})dd^c(w_{\epsilon}-\tilde{w}_{\epsilon})
 		(dd^cw_{\epsilon})^{q-1}\wedge T\right)^{1/2}$\\
 		$\leq \left(\int\limits_{\bar{B}_1}(u_j-u_k)dd^cu_k(dd^cw_{\epsilon})^{q-1}\wedge T\right)^{1/2}$\\
 		$\times \left(\int\limits_{B_{3/2}}
 		(dd^cw_{\epsilon})^q\wedge T\right)^{1/2}$\\
 		$\leq C. \|T\|_{B_{5/3}}^{1/2}\left(\int\limits_{\bar{B}_1}(u_j-u_k)dd^cu_k\wedge(dd^cw_{\epsilon})^{q-1}\wedge T\right)^{1/2},$\\
 	\end{flushleft}
 	where $C>0$ depends only on $n, q$. Repeating this argument $q-1$ times, we get
 	\begin{center}
 		$\int\limits_{\bar{B}_1}(u_j-u_k)(dd^cw_{\epsilon})^q\wedge T
 		\leq A\left(\int\limits_{\bar{B}_1}(u_j-u_k)(dd^cu_k)^q\wedge T\right)^B.$
 	\end{center}
 	where $A, B>0$ depend only on $n, q$. Hence
 	\begin{center}
 		$\int\limits_{\bar{B}_1}(u_j-u_k)(dd^c v)^q\wedge T
 		=2^n\lim\limits_{\epsilon\searrow 0}\int\limits_{\bar{B}_1}(u_j-u_k)(dd^cw_{\epsilon})^q\wedge T
 		\leq 2^nA\left(\int\limits_{\bar{B}_1}(u_j-u_k)(dd^cu_k)^q\wedge T\right)^B.$
 	\end{center}
 	Letting $k\rightarrow\infty$ and taking the supremum with respect to $v\in PSH(B_2, (0, 1))$, we get
 	\begin{center}
 		$\sup\limits_v\int\limits_{\bar{B}_1}(u_j-u)(dd^cv)^q\wedge T
 		\leq 2^nA\left(\int\limits_{\bar{B}_1}(u_j-u)(dd^cu)^q\wedge T\right)^B.$
 	\end{center}
 	By using  Lebesgue's dominated convergence theorem, we obtain
 	\begin{center}
 		$\lim\limits_{j\to\infty}\left(\sup \{\int\limits_{\overline{B}_1}(u_j-u)(dd^c v)^q\wedge T: v\in PSH(B_2), 0\leq v\leq 1
 		\}\right)=0.$
 	\end{center}
 \end{proof}
 If $\Omega\subset\C^n$ is a domain, $T$ is a closed, positive current of bidegree $(n-q, n-q)$ in $\Omega$ and $K$ is a compact subset
 of $\Omega$, we define
 \begin{center}
 	$C(K, \Omega, T)= \sup \{\int\limits_{K}(dd^c v)^q\wedge T: v\in PSH(\Omega), 0\leq v\leq 1\}.$
 \end{center}
 If $U\subset\Omega$, we put 
 \begin{center}
 	$C(U, \Omega, T)=\sup \{C(K, \Omega, T): K$ is a compact subset of $ U\}.$
 \end{center}
 \begin{Cor}\label{cor.quasicontinuous}
 	Let $u\in PSH(\Omega)\cap L^{\infty}(\Omega)$, where $\Omega\subset \C^n$ is a domain. 
 	Let $T$ be a closed, positive current of bidegree $(n-q, n-q)$ in $\Omega$, where $1\leq q\leq n$. For each $\epsilon>0$, 
 	there exists an open set $U\subset\Omega$ such that $C( U, \Omega, T)<\epsilon$ and the restriction of $u$
 	in $\Omega\setminus U$ is continuous.
 \end{Cor}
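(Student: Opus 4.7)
The approach is the classical Bedford--Taylor proof of quasi-continuity, adapted to the weighted capacity $C(\cdot,\Omega,T)$, with Theorem \ref{the.capconver} playing the role of the standard capacity convergence result.

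First I would exhaust $\Omega$ by an increasing sequence of relatively compact open sets $\Omega_1\Subset\Omega_2\Subset\cdots$ with $\bigcup_m\Omega_m=\Omega$, and cover each $\overline{\Omega_m}$ by finitely many balls $B(p_i,r_i)\Subset B(p_i,2r_i)\Subset\Omega$. Standard convolution on each $B(p_i,2r_i)$ produces smooth psh functions decreasing to $u$, which after rescaling can be fed into Theorem \ref{the.capconver}. Combining the resulting local estimates via a finite-cover argument yields, for each $m$, continuous psh functions $u^{(m)}_j$ on a neighborhood of $\overline{\Omega_m}$ with $u^{(m)}_j\searrow u$ and
\[
\sup_{v\in PSH(\Omega,[0,1])}\int_{\overline{\Omega_m}}(u^{(m)}_j-u)(dd^cv)^q\wedge T\;\xrightarrow[j\to\infty]{}\;0.
\]

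Next, for each $m$ and each $k\in\N$, I would choose $j_{m,k}$ so large that the supremum above is below $\epsilon/(k\,2^{m+k})$, and set
\[
V_{m,k}:=\{z\in\Omega_m:u^{(m)}_{j_{m,k}}(z)-u(z)>1/k\},\qquad U:=\bigcup_{m,k}V_{m,k}.
\]
Each $V_{m,k}$ is open because $u^{(m)}_{j_{m,k}}-u$ is lower semicontinuous. A Chebyshev-type estimate gives $\int_{V_{m,k}}(dd^cv)^q\wedge T\leq\epsilon/2^{m+k}$ uniformly in $v\in PSH(\Omega,[0,1])$, hence $C(V_{m,k},\Omega,T)\leq\epsilon/2^{m+k}$. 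Subadditivity of $C(\cdot,\Omega,T)$---which follows from the inner regularity built into its definition together with the fact that each $(dd^cv)^q\wedge T$ is a Radon measure---then yields $C(U,\Omega,T)<\epsilon$.

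To finish, for any $z\in\Omega_m\setminus U$ we have $u(z)\leq u^{(m)}_{j_{m,k}}(z)\leq u(z)+1/k$ for every $k$, so the continuous functions $u^{(m)}_{j_{m,k}}$ converge uniformly to $u$ on $\Omega_m\setminus U$; thus $u|_{\Omega_m\setminus U}$ is continuous, and running over the exhaustion shows $u|_{\Omega\setminus U}$ is continuous. The step I expect to be the main obstacle is the globalization: Theorem \ref{the.capconver} is stated on a fixed ball with test functions plurisubharmonic on the same ball, whereas the capacity $C(\cdot,\Omega,T)$ uses test functions plurisubharmonic on all of $\Omega$. One has to observe that restricting any $v\in PSH(\Omega,[0,1])$ to a ball of the cover is admissible in Theorem \ref{the.capconver} (the supremum there being taken over a potentially larger class of local test functions), and then patch the finitely many local capacity estimates into a single uniform estimate on $\overline{\Omega_m}$.
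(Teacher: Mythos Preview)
Your argument is correct and is precisely the standard Bedford--Taylor derivation of quasicontinuity from a capacity convergence result; the paper states this corollary without proof, treating it as an immediate consequence of Theorem~\ref{the.capconver}, so your outline is exactly what is being implicitly invoked. One small clarification: you do not need to patch local approximations into a single $u^{(m)}_j$---the global convolutions $u\ast\chi_{1/j}$ are already smooth psh on a neighborhood of $\overline{\Omega_m}$, and the finite ball cover is used only to apply Theorem~\ref{the.capconver} locally and then sum the resulting estimates (your globalization remark at the end handles the passage from local to global test functions correctly).
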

 \begin{Lem}\label{lem.compacontinuous}
 	The theorem \ref{the.newcompa.appendix} is true when $u$ and $v$ are continuous.
 \end{Lem}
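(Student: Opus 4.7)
The plan is to adapt the classical Bedford--Taylor comparison argument, slotting in the passive positive current $T$. First I would reduce to a strict boundary inequality: fix $\delta>0$, and use $\liminf_{z\to\partial\Omega}(u(z)-v(z))\geq 0$ to find an open neighborhood $N$ of $\partial\Omega$ in $\Omega$ on which $u>v-\delta$. The set $K_\delta:=\{u\leq v-\delta\}$ is then a compact subset of $\Omega\setminus N$. Since $u$ and $v$ are continuous, the function $w:=\max(u,v-\delta)$ is continuous, bounded and plurisubharmonic on $\Omega$, and coincides with $u$ on the open set $\Omega\setminus K_\delta\supset N$. On the open subset $\{u<v-\delta\}\subset K_\delta$ one has $w=v-\delta$ and hence $(dd^cw)^q\wedge T=(dd^cv)^q\wedge T$ as currents, while on $\Omega\setminus K_\delta$ the currents $(dd^cu)^q\wedge T$ and $(dd^cw)^q\wedge T$ coincide by the local nature of the Monge--Amp\`ere wedge product.

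The core of the argument is the mass identity
\[
\int_{K_\delta}(dd^cw)^q\wedge T=\int_{K_\delta}(dd^cu)^q\wedge T.
\]
I would prove it by choosing $\chi\in C_c^\infty(\Omega)$ with $\chi\equiv 1$ on an open neighborhood of $K_\delta$, so that it suffices to show $\int\chi(dd^cu)^q\wedge T=\int\chi(dd^cw)^q\wedge T$ (the pieces outside $K_\delta$ cancel because $u=w$ there). Regularize $u$ and $w$ by convolution, $u_\epsilon=u*\rho_\epsilon$ and $w_\epsilon=w*\rho_\epsilon$, which are smooth plurisubharmonic and, for $\epsilon$ small, satisfy $u_\epsilon=w_\epsilon$ on a neighborhood of $\operatorname{supp}(dd^c\chi)$. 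Expanding
\[
(dd^cu_\epsilon)^q-(dd^cw_\epsilon)^q=dd^c(u_\epsilon-w_\epsilon)\wedge\sum_{i=0}^{q-1}(dd^cu_\epsilon)^i\wedge(dd^cw_\epsilon)^{q-1-i},
\]
multiplying by $\chi$ and applying Stokes' theorem twice to move both $d$ and $d^c$ onto $\chi$, one sees that the smooth integral vanishes. Passing $\epsilon\to 0^+$ using Lemma \ref{lem.weakconvergence} yields the desired equality with the continuous data.

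Granted the identity, the comparison follows by a clean chain of inequalities. Since $\{u<v-\delta\}\subset K_\delta$ and the relevant currents are positive,
\[
\int_{\{u<v-\delta\}}(dd^cv)^q\wedge T
=\int_{\{u<v-\delta\}}(dd^cw)^q\wedge T
\leq\int_{K_\delta}(dd^cw)^q\wedge T
=\int_{K_\delta}(dd^cu)^q\wedge T
\leq\int_{\{u<v\}}(dd^cu)^q\wedge T,
\]
where the last inequality uses $K_\delta\subset\{u<v\}$ since $\delta>0$. Finally, letting $\delta\searrow 0$, the sets $\{u<v-\delta\}$ increase to $\{u<v\}$, and monotone convergence for the positive Radon measure $(dd^cv)^q\wedge T$ produces the inequality of the theorem.

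The main obstacle is the mass identity above: the wedge products $(dd^cu)^i\wedge(dd^cw)^{q-1-i}\wedge T$ are not a priori defined for the continuous data, only as weak limits of smooth regularizations, so a direct application of Stokes to the continuous objects is unavailable. One must therefore perform the integration by parts at the regularized level, where everything is smooth and classical, and invoke Lemma \ref{lem.weakconvergence} to pass to the limit against the continuous compactly supported test function $\chi$. Continuity of $u$ and $v$ enters exactly to keep $u-w$ continuous with $\{u\neq w\}$ contained in $K_\delta$, ensuring that the convolutions $u_\epsilon$ and $w_\epsilon$ still agree on a neighborhood of $\operatorname{supp}(dd^c\chi)$ for all sufficiently small $\epsilon$.
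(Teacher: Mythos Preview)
Your argument is correct and follows essentially the same route as the paper: both use the auxiliary function $\max\{u,v-\delta\}$ (the paper calls it $v_\epsilon$), a smooth cutoff equal to $1$ on the compact set where it differs from $u$, and an integration-by-parts mass identity, then let $\delta\searrow 0$. The only cosmetic differences are that the paper first reduces to the case $u<v$ on $\Omega$ with $u=v$ on $\partial\Omega$, and performs the integration by parts directly at the level of the Bedford--Taylor products rather than via explicit mollification; your more explicit regularization step is a legitimate (and arguably more careful) way to justify the same identity.
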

 \begin{proof}
 	In this case we may suppose that for each $w\in\partial\Omega$,
 	\begin{center}
 		$\lim\limits_{\Omega\ni z\to w}(u(z)-v(z))=0,$
 	\end{center}
 	and $u<v$ in $\Omega$. For $\epsilon>0$, we define $v_{\epsilon}=\max\{v-\epsilon, u\}$. Then $v_{\epsilon}=u$ near $\partial\Omega$.\\
 	
 	Given $\epsilon>0$, choose a non-negative function $\phi\in C_0^{\infty}(\Omega)$ such that $\phi\equiv 1$ 
 	in a neighbourhood of the closure of the set where $v_{\epsilon}>u$. Then
 	\begin{flushleft}
 		$\begin{array}{ll}
 		\int\limits_{\Omega}\phi (dd^c v_{\epsilon})^q\wedge T
 		&=\int\limits_{\Omega}v_{\epsilon}dd^c\phi\wedge (dd^c v_{\epsilon})^{q-1}\wedge T\\
 			&=\int\limits_{\Omega}udd^c\phi\wedge (dd^c u)^{q-1}\wedge T\\
 			&=\int\limits_{\Omega}\phi (dd^c u)^q\wedge T.
 		\end{array}$
 	\end{flushleft}
 	Hence
 	\begin{center}
 		$\int\limits_{\Omega} (dd^c v_{\epsilon})^q\wedge T=\int\limits_{\Omega} (dd^c u)^q\wedge T.$
 	\end{center}
 	Moreover, for any $\psi\in C_0(\Omega, [0,1])$,
 	\begin{center}
 		$\int\limits_{\Omega}\psi (dd^c v)^q\wedge T=
 		\lim\limits_{\epsilon\to 0}\int\limits_{\Omega}\psi (dd^c v_{\epsilon})^q\wedge T
 		\leq \int\limits_{\Omega} (dd^c v_{\epsilon})^q\wedge T.$
 	\end{center}
 	Thus,
 	\begin{center}
 		$\int\limits_{\Omega}(dd^c v)^q\wedge T\leq\int\limits_{\Omega}(dd^c u)^q\wedge T.$
 	\end{center}
 \end{proof}
 \begin{proof}[Proof of Theorem \ref{the.newcompa.appendix}]
 	By considering $u+2\delta$ instead of $u$ and then letting $\delta\searrow 0$, we can assume that
 	\begin{center}
 		$\liminf\limits_{\Omega\ni z\to\partial \Omega}(u(z)-v(z))\geq 2\delta>0$,
 	\end{center}
 	and it remains to show that
 	\begin{center}
 		$\int\limits_{\{u<v\}}(dd^c v)^q\wedge T\leq\int\limits_{\{u<v+\delta\}}(dd^c u)^q\wedge T.$
 	\end{center}
 	Then the set $S=\{u<v+\delta\}$ is relatively compact in $\Omega$. 
 	Therefore we can find decreasing sequences of continuous plurisubharmonic functions $u_j, v_j$ in an open neighborhood $\Omega_1$ of
 	$\bar{S}$ such that $\lim u_j=u, \lim v_j=v$ and $u_j\geq v_j$ on $\Omega_1\setminus\overline{\Omega_2}$ for all $j$,
 	where $\Omega_2$ is open such that $\bar{S}\subset\Omega_2\Subset\Omega_1$.\\
 	Choose $M>2\max \{\|u\|_{\Omega}, \|u_1\|_{\Omega_1}, \|v\|_{\Omega}, \|v_1\|_{\Omega_1}\}$.\\
 	By Lemma \ref{lem.compacontinuous}, if $j\geq k$, we have,
 	\begin{center}
 		$\int\limits_{\{u_k<v_j\}}(dd^c v_j)^q\wedge T\leq\int\limits_{\{u_k<v_j\}}(dd^c u_k)^q\wedge T.$
 	\end{center}
 	Take $\epsilon>0$. Let $G_{\epsilon}$ be an open subset of $\Omega_1$ such that $C(G_{\epsilon}, \Omega_1, T)<\epsilon$ and 
 	the restrictions of $u$ and $v$ to $\Omega_1\setminus G_{\epsilon}$ are cotinuous. 
 	By Tietze's extension theorem, there is a function $f\in C(\Omega_1)$ such that $f=v$ on $\Omega_1\setminus G_{\epsilon}$.
 	By Lemma \ref{lem.weakconvergence}, we have
 	\begin{center}
 		$\int\limits_{\{u_k<f\}}(dd^c v)^q\wedge T\leq\liminf\limits_{j\to\infty}\int\limits_{\{u_k<f\}}(dd^c v_j)^q\wedge T.$
 	\end{center}
 	Moreover,
 	\begin{center}
 		$\{u_k<f\}\cup G_{\epsilon}=\{u_k<v\}\cup G_{\epsilon}.$
 	\end{center}
 	Therefore
 	\begin{flushleft}
 		$\begin{array}{ll}
 		\int\limits_{\{u_k<v\}}(dd^c v)^q\wedge T
 		&\leq \int\limits_{\{u_k<f\}}(dd^c v)^q\wedge T+\int\limits_{G_{\epsilon}}(dd^c v)^q\wedge T\\
 		&\leq\liminf\limits_{j\to\infty} \int\limits_{\{u_k<f\}}(dd^c v_j)^q\wedge T+\int\limits_{G_{\epsilon}}(dd^c v)^q\wedge T\\
 		&\leq\liminf\limits_{j\to\infty} \int\limits_{\{u_k<v_j\}\cup G_{\epsilon}}(dd^c v_j)^q\wedge T
 		+\int\limits_{G_{\epsilon}}(dd^c v)^q\wedge T\\
 		&\leq\liminf\limits_{j\to\infty} \int\limits_{\{u_k<v_j\}}(dd^c v_j)^q\wedge T+2M^nC(G_{\epsilon}, \Omega_1, T)\\
 		&\leq\lim\limits_{j\to\infty} \int\limits_{\{u_k<v_j\}}(dd^c u_k)^q\wedge T+2M^nC(G_{\epsilon}, \Omega_1, T)\\
 		&\leq \int\limits_{\{u_k\leq v\}}(dd^c u_k)^q\wedge T+2M^nC(G_{\epsilon}, \Omega_1, T).
 		\end{array}$
 	\end{flushleft}
 	Hence
 	\begin{equation}\label{1.proofcompa.eq}
 	\int\limits_{\{u_k<v\}}(dd^c v)^q\wedge T\leq \int\limits_{\{u_k\leq v\}}(dd^c u_k)^q\wedge T+2M^nC(G_{\epsilon}, \Omega_1, T).
 	\end{equation}
 	Fix $k_0>0$ and denote $C(k_0)=C(\{u_{k_0}>u+\delta\}\cap\Omega_2, \Omega_1, T)$. We have $\lim\limits_{k_0\to\infty}C(k_0)=0$.\\
 	Moreover, for any $k>k_0$,
 	\begin{center}
 		$\{u_k\leq v\}\subset \{u_{k_0}\leq v+\delta\}\cup (\{u_{k_0}>u+\delta\}\cap\Omega_2).$
 	\end{center}
 	Then
 	\begin{equation}\label{2.proofcompa.eq}
 	\int\limits_{\{u_k\leq v\}}(dd^c u_k)^q\wedge T\leq \int\limits_{\{u_{k_0}\leq v+\delta\}}(dd^c u_k)^q\wedge T+M^nC(k_0).
 	\end{equation}
 	By the compactness of $\{u_{k_0}\leq v+\delta\}$, we have
 	\begin{equation}\label{3.proofcompa.eq}
 	\limsup\limits_{k\to\infty}\int\limits_{\{u_{k_0}\leq v+\delta\}}(dd^c u_k)^q\wedge T
 	\leq \int\limits_{\{u_{k_0}\leq v+\delta\}}(dd^c u)^q\wedge T. 
 	\end{equation}
 	Combining \eqref{1.proofcompa.eq}, \eqref{2.proofcompa.eq} and \eqref{3.proofcompa.eq}, and letting $k\rightarrow\infty$, we get
 	\begin{center}
 		$\int\limits_{\{u<v\}}(dd^c v)^q\wedge T\leq \int\limits_{\{u_{k_0}\leq v+\delta\}}(dd^c u)^q\wedge T+2M^nC(G_{\epsilon}, \Omega_1, T)
 		+M^nC(k_0).$
 	\end{center}
 	Letting $k_0\rightarrow\infty$ and $\epsilon\rightarrow 0$, we obtain
 	\begin{center}
 		$\int\limits_{\{u<v\}}(dd^c v)^q\wedge T\leq\int\limits_{\{u<v+\delta\}}(dd^c u)^q\wedge T.$
 	\end{center}
 \end{proof}

\end{document}